\pgfplotsset{compat=newest}
\renewcommand{\vec}[1]{{\ensuremath{{\boldsymbol #1}}}}
  \crefname{equation}{}{}
  \crefname{theorem}{Theorem}{Theorems}
  \crefname{corollary}{Corollary}{Corollaries}
  \crefname{figure}{Figure}{Figures}
  \crefname{lemma}{Lemma}{Lemmas}
  \crefname{subsection}{Section}{Sections}
  \crefname{section}{Section}{Sections}
  \crefname{algocf}{Algorithm}{Algorithms}
  \crefname{assumption}{Assumption}{Assumptions}
  \crefname{proposition}{Proposition}{Propositions}
  \newtheorem{theorem}{Theorem}[section]
  \newtheorem{remark}[theorem]{Remark}
  \newtheorem{lemma}[theorem]{Lemma}
  \newtheorem{proposition}[theorem]{Proposition}
  \newtheorem{assumption}[theorem]{Assumption}
  \newtheorem{corollary}[theorem]{Corollary}
  \newcommand{\email}[1]{\protect\href{mailto:#1}{#1}}
  \newenvironment{keywords}{\textbf{Keywords:}}{}
  \newenvironment{AMS}{\textbf{AMS Classification:}}{}
  \crefname{equation}{}{}
\newcommand\defconstlist[2]{%
  \@ifundefined{rc@constlist@#1}{%
    \expandafter\newcommand\csname rc@useconst@#1\endcsname[1]{#2}%
    \expandafter\newcount\csname rc@constcounter@#1\endcsname%
    \expandafter\global\csname rc@constcounter@#1\endcsname=1\relax%
  }%
  {\errmessage{Constant list was already defined}}}%
\newcommand{\defaultconstlist}{deflist}
\newcommand\defconst[2][\defaultconstlist]{%
  \@ifundefined{rc@const@#1@#2}{%
      \expandafter\edef\csname rc@const@#1@#2\endcsname{%
      \noexpand\csname rc@useconst@#1\endcsname{\the\csname rc@constcounter@#1\endcsname}%
    }%
    \global\expandafter\advance\csname rc@constcounter@#1\endcsname1\relax%
  }%
  {\@warning{Constant was already defined}}%
}
\newcommand\const[2][\defaultconstlist]{%
  \@ifundefined{rc@const@#1@#2}{%
    \@ifundefined{rc@const@@custom@#2}{%
      \@warning{Undefined constant}??%
    }%
    {\csname rc@const@@custom@#2\endcsname}%
  }{\csname rc@const@#1@#2\endcsname}%
}
\newcommand\defcustomconst[2]{%
  \@ifundefined{rc@const@@custom@#1}{%
    \expandafter\def\csname rc@const@@custom@#1\endcsname{\noexpand#2}%
  }%
  {\@warning{Constant was already defined}}%
}
\def\kernel{\kappa}
\long\def\changed#1{{\color{red}{#1}}}
\def\groneq{Gr\"onwall's inequality}
\def\ito{It\^o}
\def\holdereq{H\"older's inequality}
\def\P{n}
\def\vecZspace{\rset^{d \times \P}}
\def\gdrift{\nu}
\def\gdiffusion{\varsigma}
\def\drift{a}
\def\diffusion{\sigma}
\def\law{\mu}
\def\X#1#2{X_{#1}^{#2}}
\def\Z#1#2{Z_{#1}^{#2}}
\def\eqand{\mathrm{and}\qquad}
\def\eqperiod{\mathrm{\ .}}
\def\eqcomma{\mathrm{\ ,}}
\def\Cbspace{\overline C_{b}}
\DeclarePairedDelimiter\seminorm{\lvert}{\rvert}
\newcommand{\pushright}[1]{\ifmeasuring@#1\else\omit\hfill$\displaystyle#1$\fi\ignorespaces}
\newcommand{\pushleft}[1]{\ifmeasuring@#1\else\omit$\displaystyle#1$\hfill\fi\ignorespaces}
\newcounter{@countComma@counter}
\def\countComma#1{%
  \setcounter{@countComma@counter}{0}%
  \expandafter\@countComma#1,\@@countComma,\relax}
\def\@countComma#1,#2\relax{\ifx\@@countComma#1%
  \addtocounter{@countComma@counter}{-1}%
  \else\@countComma#2\relax%
  \stepcounter{@countComma@counter}\fi}
\def\replaceComma#1{%
  \expandafter\@replaceComma#1,\@@replaceComma,\relax}
\def\@replaceComma#1,#2\relax{\ifx\@@replaceComma#1%
  \else\partial#1\@replaceComma#2\relax\fi}
\NewDocumentCommand{\pderv}{smm}
{
  \IfBooleanTF{#1}
  {%
    \countComma{#3}%
    \frac{\partial%
      \ifnum\value{@countComma@counter}>0%
      {\stepcounter{@countComma@counter}
        ^\the@countComma@counter}%
      \fi%
      #2}{\replaceComma{#3}}%
  }
  {%
    \@@pderv:nn { #2 } { #3 }
  }
}
\NewDocumentCommand{\derv}{smm}
{
  \IfBooleanTF{#1}
  {%
    \frac{\D #2}{\D #3}
  }
  {%
    \@@pderv:nn { #2 } { #3 }
  }
}
\author{Abdul-Lateef Haji-Ali\thanks{Maxwell Institute, Heriot-Watt University, Edinburgh, UK.
    (\email{a.hajiali@hw.ac.uk}).}%
  \and H\r{a}kon Hoel\thanks{%
    University of Oslo, Norway. (\email{haakonah@math.uio.no}).}%
  \and Ra\'ul Tempone \thanks{ RWTH Aachen University, Germany and King
    Abdullah University of Science \& Technology (KAUST), Thuwal, Saudi Arabia
    ({\email{raul.tempone@kaust.edu.sa}}).}%
}
\def\todo#1{}
\def\changed#1{#1}
\title{Weak convergence analysis in the particle limit of the McKean--Vlasov
  equations using stochastic flows of particle systems}
\begin{document}

\maketitle

\begin{abstract}
  We present a proof showing that the weak error of a system of \(\P\)
  interacting stochastic particles approximating the solution of the
  McKean--Vlasov equation is \(\Order{\P^{-1}}\). Our proof is based on the
  Kolmogorov backward equation for the particle system and bounds on the
  derivatives of its solution which we derive more generally using the
  variations of the stochastic particle system. The convergence rate is
  verified by numerical experiments which also indicate that the assumptions
  made here and in the literature can be relaxed.
\end{abstract}

\begin{keywords}
  Interacting stochastic particle systems, McKean--Vlasov, Stochastic
  mean-field limit, Weak convergence rates, stochastic flows.
\end{keywords}

\begin{AMS}
  65C05, 62P05
\end{AMS}

\section{Introduction}\label{sec:intro}%
For \(\drift \: : \: \rset^{d} \times \rset \to \rset^{d}\),
\(\kernel_{1},\kernel_{2} \: : \: \rset^{d} \times \rset^{d} \to \rset\) and
\(\diffusion \: : \: \rset^{d} \times \rset \to \rset^{d \times d'}\), \(t \geq 0\) and
\(\br{W\p{s}}_{s \geq 0}\) being a \(d'\)-dimensional Wiener process defined on
some probability space with the natural filtration, we consider the following
McKean--Vlasov equation:
\begin{equation}\label{eq:Z-MV}
  \begin{aligned}
    Z\p{t} = \xi &&+& \int_{0}^{t} \drift \p*{Z\p{s}, \int_{\rset^{d}} \kernel_{1}\p{Z\p{s}, \vec z} \law_{s}\p{\!\D \vec z} } \D s \\
               &&+& \int_{0}^{t} \diffusion\p*{Z\p{s}, \int_{\rset^{d}} \kernel_{2}\p{Z\p{s}, \vec z} \law_{s}\p{\!\D \vec z}} \D W\p{s}\eqcomma
  \end{aligned}
\end{equation}
where \(\law_{s}\) denotes the law of \(Z\p{s}\) for all \(s \geq 0\), and \(\xi\)
denotes a random initial state whose law is \(\law_{0}\) and is assumed to be
independent of the Wiener process, \(W\). We focus on one-dimensional
interaction kernels \(\kernel_{1},\kernel_{2} \: : \: \rset^{d} \times \rset^{d} \to
\rset\) for clarity of presentation since high-dimensional kernels can be
treated in a similar way; see \cref{rem:multi-kernels}.
For \(\drift\p{x, y} = y\), with \(\diffusion\) being constant and
\(\kernel_{1}\) being bounded and Lipschitz, \cite[Theorem
I.1.1]{sznitman1991topics} shows the existence and uniqueness of a strong
solution to \cref{eq:Z-MV}. A recent analysis yielded the same result in
\cite{mishura:mkvlasov-existence} when the initial condition \(\xi\) has a
finite fourth moment, \(\drift\p{x,y}=\diffusion\p{x,y}=y\), under
non-degeneracy conditions on \(\kernel_{2}\), and when for all \(\vec x,\vec
x',\vec y \in \rset^{d}\), there exists a constant \(C>0\) such that
\[
  \begin{aligned}
    \abs{\kernel_2\p*{\vec x,\vec y}}+\abs{\kernel_2\p*{\vec x,\vec y}} &\leq C\p{1+\norm{\vec x}} \eqcomma\\
    \abs{\kernel_2\p*{\vec x,\vec y}-\kernel_2\p*{\vec x',\vec y}} &\leq C\,\p{1+\norm{\vec y}^{2}}\,\norm{\vec x-\vec x'} \eqperiod
  \end{aligned}
\] {The work \cite{crisan:approximate-spdes} also shows the existence and
  uniqueness result for \(\kernel_{2}=0\) and a particular form of \(\drift\).
  Existence of weak solutions was also shown in \cite{lukasz:lyapunov} under
  certain measure-dependent Lyapunov conditions. %
}
In the current work, we do not focus on existence of solutions to
\cref{eq:Z-MV} and instead assume the existence of weak solutions and consider
approximations of \(Z\) using a system of \(n\) \ito{} stochastic differential
equations~(SDEs), also known as an interacting stochastic particle system,
with pairwise interaction kernels:
\begin{equation}\label{eq:X-sys}
  \begin{aligned}
    X_i^{\P}\p{t} = \xi_{i}
    &&+& \int_{0}^{t} \drift\p*{X_i^{\P}\p{s}, \frac{1}{\P} \sum_{j=1}^{\P} \kernel_{1}\p*{X_i^{\P}\p{s},X_j^{\P}\p{s}}} \D s \\
    &&+& \int_{0}^{t} \diffusion\p*{X_{i}^{\P}\p{s}, \frac{1}{\P} \sum_{j=1}^{\P} \kernel_{2}\p{X_i^{\P}\p{s}, X_j^{\P}\p{s}}} \D W_i\p{s} \eqcomma
  \end{aligned}
\end{equation}
for $i \in \br{1, 2, \ldots, \P}$, where \(\xi_{i}\) are i.i.d. and have the same law,
\(\law_{0}\), and \(\br{W_{i}\p{s}}_{s \geq 0}\) are independent
\(d'\)-dimensional Wiener processes and independent of
\(\br{\xi_{i}}_{i=1}^{\P}\). In other words, the law \(\law_{t}\) for \(t\geq 0\)
is approximated by an empirical measure based on the particles \(\vec
X^{\P}\p{t} \defeq \br{X_{i}^{\P}\p{t}}_{i=1}^{\P}\). It should be noted that
these particles are identically distributed but not independent.

For \(\vec Z^{n} \defeq \p{Z_{i}}_{i=1}^{\P}\) being \(\P\) independent
samples of the solution to the McKean--Vlasov equation \eqref{eq:Z-MV} and a
function \(g : \vecZspace \to \rset\), the weak error at time \(t\) is
defined as the absolute difference \(\abs{\E{g\p{\vec X^{\P}\p{t}}} -
  \E{g\p{\vec Z^{n}\p{t}}}}\). The weak error was established to be
\(\Order{1/\P}\) in, e.g., ~\cite[Chapter 9]{kolokoltsov:book-nonlinear}
and~\cite[Theorem 6.1]{mischler2015new}%
.
These works assume that \(\kernel_{2} \equiv 0\) and build upon semigroup theory in
measure-valued function spaces to prove their results. {On the other hand, the
  work \cite{jordain:bias-particle} employs a similar methodology to the
  current work but assumes that \(\kernel_{1}\) and \(\kernel_{2}\) in
  \eqref{eq:Z-MV} do not depend on the state \(Z\)}.
There is an increasing interest in extending proofs of strong and weak
convergence in more general settings with nonlinear drift/diffusion
coefficients. To that end, works such as ~\cite[Theorem
B.2]{szpruch:antithetic}, ~\cite[Theorem 2.17]{chassagneux:weak} and
\cite{deraynal:backward-kolmogorov} use
Lions-derivatives~\cite{cardaliaguet:mean-field-notes} to bound derivatives
with respect to measures and a master equation for probability
measures~\cite{cardaliaguet:master,chassagneux2014probabilistic}. For a more
exhaustive literature review, see \cite{chassagneux:weak}.

In \cref{sec:weak-conv}, we present a new method to show the rate of weak
convergence. The principal steps %
involve using the Kolmogorov backward equation to represent the weak error and the
stochastic flows and the dual functions to bound the weights in the resulting
dual weighted residual representation.
Using the Kolmogorov backward equation to estimate the weak error in SDEs goes
back to the ideas of Talay and Tubaro \cite{TaTub90}, who estimated the time
discretization error for uniform deterministic time-steps. The works
\cite{BALLY199535,BallyTa96}, extended the analysis to approximations with
non-smooth observables and the probability density of the solution at a given
time. Kloeden and Platen \cite{kloden:numsde} generalized the analysis in
\cite{TaTub90} to weak approximations of a higher order. Later, in a series of
works inspired by \cite{TaTub90}, the authors developed methods based on
stochastic flows and dual functions to bound the weights in the resulting
dual-weighted residual representation. This approach provided the analysis for
the weak approximation of SDEs using non-uniform, possibly stochastic,
time-steps, see
\cite{szepessy2001adaptive,moon2005convergence,mordecki2008adaptive,bayer2010adaptive}.
Furthermore, the same analysis line was also used for adaptive Multilevel
Monte Carlo, \cite{hoel2014implementation,hoel2016construction}.

The closest inspiration for deriving the weak convergence rate goes back to
the use of the mentioned techniques in the context of multiscale
approximation. Those works derived macroscopic SDEs continuum models by
choosing their drift and diffusion functions to minimize the weak error in the
given macroscopic observables when compared with a given base model.
Particularly, \cite{katsoulakis:limparticle} employed master equations with
long-range interaction potentials as a base model as the stochastic Ising
model with Glauber dynamics, whereas \cite{Schwerin10} determined the
stochastic phase-field models from atomistic formulations by coarse-graining
molecular dynamics to model the dendritic growth of a crystal in an
under-cooled melt. Systems of coupled SDEs with increasing size could also be
useful for approximating a non-Markovian behavior. For example, the recent
work \cite{bayer2020weak} investigated weak convergence rates for a rough
stochastic volatility model emerging in mathematical finance, namely, the
rough Bergomi model. As in this study, the analysis in \cite{bayer2020weak}
also employed a dual-weighted representation of the weak error, yielding an
error expansion that characterizes the weak convergence rate.
{A similar method was also used in \cite{jordain:bias-particle} for a special
  case of \eqref{eq:Z-MV} in which \(\kernel_{1}\) and \(\kernel_{2}\) do not
  depend on the state \(Z\), i.e., \(\kernel_{\cdot}\p{x,y}=\kernel_{\cdot}\p{y}\). A
  key difference to our methodology is that in \cite{jordain:bias-particle},
  the Kolmogorov backward equation involves \(\law_{t}\), the law of
  \(Z\p{t}\) while we instead utilize the Kolmogorov backward equation for the
  particle system \eqref{eq:X-sys}}.

In \cref{sec:sde-variation-bounds} we prove the technical results that are
needed for the preceding analysis. In particular, we determine sufficient
conditions to bound derivatives of the solution to the Kolmogorov backward
equation for a generic multidimensional SDE by bounding moments of the first,
second, and third variations of the
SDE. %
Finally, in \cref{sec:numerics} we numerically study the weak error of a
particle approximation to the solution of the McKean-Vlasov equation. In
particular, we show numerically that the weak convergence rate is the same for
an example stochastic particle system that does not satisfy the regularity
conditions of our theory or those of others in the literature cited above.
Therefore, further work is necessary to extend the current approaches.

In what follows, we will use the notation \(A \lesssim B\) to denote that there is a
constant \(0 < c < \infty\) which is independent of \(\P\), the size of the
particle system \cref{eq:X-sys}, such that \(A \leq c B\).
For a multi-index \(\vec \ell \in \nset^{\P}\), \(n \in \nset\), define the
derivative
\[
  \frac{\partial^{\abs{\vec \ell}}}{\partial x^{\vec \ell}} \defeq
  \frac{\partial^{\abs{\vec \ell}}}{\prod_{j=1}^{\P} \partial x_{j}^{\ell_{j}}}\eqcomma
\]
where \(\abs{\vec \ell} = \sum_{i=1}^{n} \ell_{i}\). Let \(\norm{\cdot}\) denote the Euclidean norm and let
\(C\p{\rset^{n} ; \rset^{m}}\) denote the space of continuous functions \(u \equiv
\p{u_{i}}_{i=1}^{m} : \rset^{n} \to \rset^{m}\) for which the \changed{extended}
norm
\[
  \norm{u}_{C\p{\rset^{n} ; \rset^{m}}} \defeq \sum_{j=1}^{m} \sup_{\vec x \in
    \rset^{m}} \abs{u_{j}\p*{\vec x}}
\]
is finite. Let also \(C\p{\rset^{n} ; \rset} = C\p{\rset^{n}}\). When \(u\)
has continuous derivatives up to order \(k\), define the %
\changed{extended} semi-norm
\begin{equation*}
  \seminorm{u}_{\Cbspace^{k}\p{\rset^{n} ; \rset^{m}}} =
  \sum_{j=1}^{m} \: \sum_{\vec \ell \in \nset^{\P}, 1 \leq \abs{\vec \ell} \leq k}
  \norm*{\frac{\partial^{\abs{\vec \ell}} u_{j}}{\partial \vec x^{\vec \ell} }}_{C\p{\rset^{n} \,; \,\rset^{m}}}\eqperiod
\end{equation*}
and the norm \(\norm{u}_{C^k\p{\rset^{n} ; \rset^{m}}} =
\norm{u}_{C\p{\rset^{n} ; \rset^{m}}} + \seminorm{u}_{\Cbspace^{k}\p{\rset^{n}
    ; \rset^{m}}} \). For a vector \(\vec x \in \vecZspace\), we denote its
components as \(\vec x = \p*{x_{i,j}}_{i=\br{1, \ldots, \P}, j=\br{1, \ldots, d}} \in
\vecZspace\). Similarly for a function \(u \: : \: \vecZspace \to \rset\), we
will use the notation \(\nabla u = \p*{\frac{\partial u}{\partial x_{i,j}}}_{i=\br{1, \ldots, \P},
  j=\br{1, \ldots, d}}\) for its gradient.
 
\section{A Bound on the Weak Error}\label{sec:weak-conv}%
\def\Ddrift{\overline \drift}
In this section, we prove that the weak error as defined in the introduction
is \(\Order{1/n}\). We start by stating boundedness and convergence results
involving only samples of \(Z\), the solution to the McKean-Vlasov equation
\cref{eq:Z-MV}.
\begin{proposition}
  \changed{Assume that weak solutions to \eqref{eq:Z-MV} exist} and let
  \(\br{Z_{i}}_{i=1}^{\P}\) be \(\P\) independent processes each satisfying
  \cref{eq:Z-MV} with independent underlying Wiener processes. Let \(\kernel :
  \rset^{d} \times \rset^{d} \to \rset\) be a Lipschitz continuous function, i.e.,
  there exists a constant \(C\) such that
  \[
    \abs{\kappa\p{\vec x, \vec y} - \kappa\p{\vec x', \vec y'}} \leq C \p*{\norm{\vec x - \vec x'} + \norm{\vec y - \vec y'}}
    \qquad \textnormal{for all } \vec x, \vec x', \vec y, \vec y' \in \rset^{d}\eqperiod
  \]
  Let \(p \in \br{1, 2, \ldots} \), then for any \(i \in \br{1, \ldots, \P} \), we have
  \begin{equation}\label{eq:p-mom-kernel-sum-strong}
    {\E*{\abs*{\frac{1}{\P}\sum_{j=1}^{\P}
          \kernel\p{Z_{i}\p{t}, Z_{j}\p{t}} - \int_{\rset^{d}} \kernel\p{Z_{i}\p{t}, \vec z}
          \law_{t}\p{\!\D \vec z}
        }^{2p}}}
    \lesssim \detailed{2^{2p}\, p\, C^{2p} \, c_{2p} \, }
    \P^{-p}
    \, \E*{\norm[\big]{\Z{}{}\p{t}}^{2p}}\eqperiod
  \end{equation}
  Moreover, assuming that \(a, \sigma, \kernel_{1}\) and \(\kernel_{2}\) in
  \cref{eq:Z-MV} are Lipschitz continuous, we have
  \begin{equation}
    \label{eq:Z-moments}
    \sup_{0\leq s \leq t}\E{\norm{\Z{}{}\p{s}}^{2p}} \lesssim 1+\E{\norm{\xi}^{2p}} \eqperiod
  \end{equation}
  The hidden constants in \cref{eq:Z-moments,eq:p-mom-kernel-sum-strong}
  depend only on \(d, p\), \(t\), and the Lipschitz constants.
\end{proposition}
\begin{proof}
  The moment boundedness result \cref{eq:Z-moments} with Lipschitz assumptions
  is classical \cite{sznitman1991topics} using It\^o's formula, Young's and
  Gr\"onwall's inequalities.
  For \(p=1\), we set, without loss of generality, \(i=1\) and let
  \begin{equation}\label{eq:delta-kernel}
    \Delta_{j} \kernel
    \defeq \int_{\rset^{d}} \kernel\p{Z_{1}\p{t}, \vec z}
    \law_{t}\p{\!\D \vec z} - \kernel\p{Z_{1}\p{t}, Z_{j}\p{t}}\eqperiod
  \end{equation}
  Expanding the
  square
  \[
    \begin{aligned}
      \E*{\p*{\frac{1}{\P}\sum_{j=1}^{\P} \Delta_{j} \kernel}^{2}}
      &=
        \frac{1}{\P^{2}} \sum_{j=1}^{\P} \sum_{j'=1}^{\P} {\E*{ \Delta_{j} \kernel \Delta_{j'} \kernel }}\eqperiod
    \end{aligned}
  \]
  When \(j\neq j'\) and are both different from 1,
  we have
  \[
    \E*{\Delta_{j} \kernel \Delta_{j'} \kernel} = \E*{\E*{\Delta_{j} \kernel\given
        \Z{1}{}\p{t}} \, \E*{\Delta_{j'} \kernel\given \Z{1}{}\p{t}}}
    = 0\eqcomma
  \]
  since, for a given \(\Z{1}{}\p{t}\), \(\Delta_{j} \kernel\) and \(\Delta_{j'}
  \kernel\) are conditionally independent and \({\E{\Delta_{j} \kernel \given
      \Z{1}{} \p{t}} = 0}\) when \(j \neq 1\). When \(j=j'\) or when \(1 \in
  \br{j,j'}\), we bound using \holdereq{} and the Lipschitz assumption on
  \(\kernel\),
  \[
    \begin{aligned}
      \abs{\E*{\Delta_{j} \kernel \Delta_{j'} \kernel}}
      &\leq \E*{\p{\Delta_{j} \kernel}^{2}}\\
      &\leq \E*{\p*{\int_{\rset^{d}} C \, \norm*{\vec z - Z_{j}\p{t}} \,  \mu_{t}\p{\D \vec z}}^{2}} \\
      \begin{details}
        &\leq C^{2} \p*{\E{ \p{\E{\norm{Z}} + \norm{Z}}^{2}}} \\
        &\leq 2 C^{2}  \p{\E{\norm{Z}}^{2} + \E{\norm{Z}^{2}}} \\
      \end{details}
      &\leq 4 C^{2}\: \E*{\norm[\big]{\Z{}{}\p{t}}^{2}} \eqperiod
    \end{aligned}
  \]
  Substituting back yields the claim.
  \begin{details}
    We have
    \begin{equation}\label{eq:exist-kernel-sum-strong}
      \abs*{\E*{\p*{\frac{1}{\P}\sum_{j=1}^{\P} \Delta_{j} \kernel}^{2}}}
      \le
      \frac{12 C^{2}}{\P}
      \, \E*{\norm[\big]{\Z{}{}\p{s}}^{2}}\eqperiod
    \end{equation}
  \end{details}
  The proof for \(p>1\) is fundamentally similar using the multinomial
  theorem.%
\end{proof}

\begin{lemma}\label{lem:weak-Z-rate}
  \changed{Assume that weak solutions to \eqref{eq:Z-MV} exist} and let \(\vec
  Z^{\P} = \br{Z_{i}}_{i=1}^{\P}\) be \(\P\) independent processes each
  satisfying \cref{eq:Z-MV} with independent underlying Wiener processes,
  \(\kernel : \rset^{d} \times \rset^{d} \to \rset\) be a Lipschitz continuous
  function and \(f : \rset^{d} \times \rset \to \rset\) be such that
  \begin{equation}\label{eq:b-linear-bound}
    \abs*{\frac{\partial f\p*{\vec x, y}} {\partial y}}
    + \abs*{\frac{\partial^{2} f\p*{\vec x, y}} {\partial y^{2}}}
    \leq  \widetilde C \p{1 + \norm{\vec x} + \abs{y}}\eqcomma
  \end{equation}
  for all \(\vec x \in \rset^{d}\) and \(y \in \rset\). Then, for any \(g \in
  C^{1}(\vecZspace)\) and any \(i \in \br{1, \ldots, n}\) we have
  \begin{equation}\label{eq:b_i-bound-result}
    \begin{aligned}
      &\abs*{\E*{\p*{
      f\p*{Z_{i}\p{t}, \int_{\rset^{d}} \kernel\p{Z_{i}\p{t}, \vec z} \law_{t}\p{\!\D \vec z}}
      - f\p*{Z_{i}\p{t}, \frac{1}{\P}\sum_{j=1}^{\P} \kernel\p{Z_{i}\p{t}, Z_{j}\p{t}}}
        }
        g\p*{\vec Z^{\P}\p{t}}}} \\
      &\lesssim \P^{-1}  \: \norm*{g}_{C^{1}\p{\vecZspace}} \, \p{1 + \E{\norm{Z\p{t}}^{4}}}
        \eqperiod
    \end{aligned}
  \end{equation}
\end{lemma}
\begin{proof}
  Without loss of generality, we fix \(i=1\) and define
  \begin{equation*}
    \Delta f \defeq f\p*{Z_{1}\p{t}, \int_{\rset^{d}} \kernel\p{Z_{1}\p{t}, \vec z} \law_{t}\p{\!\D \vec z}}
          - f\p*{Z_{1}\p{t}, \frac{1}{\P}\sum_{j=1}^{\P} \kernel\p{Z_{1}\p{t}, Z_{j}\p{t}}} \eqperiod
  \end{equation*}
  By Taylor expanding, we can bound
  \def\Ddiff{\overline{f}}
  \def\DDdiff{\overline{\overline{f}}}
  \begin{equation}\label{eq:b_i-bound}
    \begin{aligned}
      \abs*{\E*{\Delta f \: g\p{\vec Z^{\P}\p{t}}}}
      \leq&&&
           \abs*{\E*{\Ddiff \p{Z_{1}\p{t}} \: \p*{
           \frac{1}{\P} \sum_{j=1}^{\P} \Delta_{j} \kernel} \,
           g\p{\vec Z^{\P}\p{t}}}} \\
       &&+&
            \, \norm*{g}_{C\p{\vecZspace}}
            \E*{
            \abs*{\DDdiff\p{Z_{1}\p*{t}}}
            \p*{ \frac{1}{\P} \sum_{j=1}^{\P}\Delta_{j} \kernel}^{2}}\eqcomma
    \end{aligned}
  \end{equation}
  where \(\Delta_{j} \kernel\) is as defined in \eqref{eq:delta-kernel} and, for all \(\vec x \in \rset^{d}\),
  \[
    \begin{aligned}
      \Ddiff \p{\vec x} &\defeq \frac{\partial f}{\partial y} \p*{\vec x, \int_{\rset^{d}} \kernel\p{\vec x, \vec z} \law_{t}\p{\!\D \vec z}}\eqcomma\\
      \eqand \DDdiff\p{\vec x} &\defeq \int_{0}^{1}
                                 \frac{\partial^{2} f}{\partial y^{2}}
                                 \p*{\vec x,
                                 s \p*{\frac{1}{\P} \sum_{j=1}^{\P} \Delta_{j} \kernel}
                                 + \int_{\rset^{d}} \kernel\p{\vec x, \vec z} \law_{t}\p{\!\D \vec z}
                                 } \p*{1-s}\; \D s\eqperiod
    \end{aligned}
  \]
  By \cref{eq:b-linear-bound} and \(\kernel\) being Lipschitz continuous,
  implying linear growth, we
  have %
  for \(\vec x \in \rset^{d}\),
  \[
    \begin{aligned}
      \abs{\Ddiff \p{\vec x}
}
      \begin{details}
        &\leq \widetilde C \,\p*{1+ \abs{\vec x} + C \p{1+\abs{\vec x}
            +\E{\norm{Z\p*{t}}}}}\\
      \end{details}
      &\lesssim 1 + \norm{\vec x}
 + \E{\norm{Z\p*{t}}}\\
      \abs{\DDdiff\p{\vec x}
}
      \begin{details}
        &\leq \int_{0}^{1} \abs*{ \frac{\partial^{2} f^{2}}{\partial y^{2}} \p*{\vec x, s
            \p*{\frac{1}{\P} \sum_{j=1}^{\P} \Delta_{j} \kernel}
            +\int_{\rset^{d}} \kernel\p{\vec x, \vec z} \law_{t}\p{\!\D \vec z}}} \p*{1-s}  \D s\\
        &\leq \widetilde C \int_{0}^{1} \p*{ 1 + \norm{\vec x} + s
          \abs*{\frac{1}{\P} \sum_{j=1}^{\P} \Delta_{j} \kernel} +\abs*{\int_{\rset^{d}}
            \kernel\p{\vec x, \vec z} \law_{t}\p{\!\D \vec z}}
        } \p*{1-s} \D s\\
        &\leq \frac{\widetilde C}{2} \p*{ 1 + \norm{\vec x} + \frac{1}{3}
          \abs*{\frac{1}{\P} \sum_{j=1}^{\P} \Delta_{j} \kernel} + \;
          \abs*{\int_{\rset^{d}} \kernel\p{\vec x, \vec z} \law_{t}\p{\!\D \vec z}}
        }\\
        &\leq \widetilde C \p*{ 1 + \norm{\vec x} + \frac{1}{6}
          \abs*{\frac{1}{\P} \sum_{j=1}^{\P} \Delta_{j} \kernel} + C\; \frac{1}{2} \;
          \p*{1 + \norm{\vec x} + \E{\norm{Z\p{t}}}}
        }  \\
      \end{details}
      &\lesssim
        1
        + \abs*{\frac{1}{\P} \sum_{j=1}^{\P} \Delta_{j} \kernel}
        + \norm{\vec x}
        + \E*{\norm*{Z\p{t}}} \eqperiod
    \end{aligned}
  \]
  By \holdereq{} and \cref{eq:p-mom-kernel-sum-strong}, we have
  \begin{equation*}
    \begin{aligned}
      \E*{\abs*{\DDdiff\p{Z_{1}\p{t}}} \p*{ \frac{1}{\P} \sum_{j=1}^{\P}\Delta_{j} \kernel}^{2}}
      \lesssim &&& \p{1 + \E{\norm{Z\p*{t}}}} \: \E*{\p*{ \frac{1}{\P} \sum_{j=1}^{\P}\Delta_{j} \kernel}^{2}}+
            \p*{\E*{\p*{ \frac{1}{\P} \sum_{j=1}^{\P}\Delta_{j} \kernel}^{4}}}^{3/4}\\
        &&+& \p*{\E*{\norm{Z\p*{t}}^{2}}}^{1/2} \p*{\E*{\p*{ \frac{1}{\P} \sum_{j=1}^{\P}\Delta_{j} \kernel}^{4}}}^{1/2}\\
      \lesssim&&& \p*{1 + \E*{\norm{Z\p*{t}}^{4}}} \: \P^{-1} \eqperiod
    \end{aligned}
  \end{equation*}
  Furthermore,
  \begin{equation}\label{eq:b_i-bound-term-1}
    \begin{aligned}
      \abs*{\E*{\Ddiff \p{Z_{1}\p{t}} \: \p*{ \frac{1}{\P} \sum_{j=1}^{\P} \Delta_{j} \kernel} \, g\p*{\vec Z^{\P}\p{t}}}}
      \leq&&& \frac{1}{\P} \; \abs*{\E*{\Ddiff \p{Z_{1}\p{t}} \: \p*{
           \Delta_{1} \kernel} \, g\p{\vec Z^{\P}\p{t}}}}\\
       &&+& \frac{1}{\P} \sum_{j=2}^{\P} \abs*{\E*{\Ddiff \p{Z_{1}\p{t}} \: \Delta_{j} \kernel \,
            g\p{\vec Z^{\P}\p{t}} }}\eqperiod
    \end{aligned}
  \end{equation}
  Here by again using \cref{eq:b-linear-bound} and \(\kernel\) being Lipschitz continuous we
  have,
  \begin{equation}\label{eq:b_i-bound-term-1-1}
    \begin{aligned}
      \abs*{\E*{\Ddiff \p{Z_{1}\p{t}} \: \p*{ \Delta_{1} \kernel} \, g\p*{\vec Z^{\P}\p{t}}}}
      &\leq %
        \norm{g}_{C\p{\vecZspace}} \, \,
        \E*{\abs{\Ddiff \p{Z_{1}\p{t}}} \: \abs*{ \Delta_{1} \kernel}}\\
      \begin{details}
        &\leq \norm{g}_{C\p{\vecZspace}} \,
        \p{C \, \widetilde C }\, \E{\p*{1+ \abs{Z_{1}\p{t}} + C
            \p{1+\abs{Z_{1}\p{t}}+\E{\abs{Z_{1}\p{t}}}}} \, \p*{\abs{Z_{1}\p{t}} +
            \E{\abs{Z_{1}\p{t}}}}
        } \\
        &\lesssim \norm{g}_{C\p{\vecZspace}} \,
        \E{\p{1+\abs{Z_{1}\p{t}}+\E{\abs{Z_{1}\p{t}}}}^{2}} \\
        &\lesssim \norm{g}_{C\p{\vecZspace}}
        \p*{1 + \E{\abs{Z\p{t}}^{2}} + \p{\E{\abs{Z\p{t}}}}^{2}} \\
      \end{details}
      &\lesssim \norm{g}_{C\p{\vecZspace}}
        \p*{1 + \E{\abs{Z\p{t}}^{4}}}\eqcomma
    \end{aligned}
  \end{equation}
  and for \(j = \br{2, \ldots, \P}\), using a Taylor expansion of \(g\),
  \begin{equation}\label{eq:b_i-bound-term-1-2}
    \begin{aligned}
      &\abs*{\E*{\Ddiff \p{Z_{1}\p{t}} \: \Delta_{j} \kernel \: g\p*{\vec Z^{\P}\p{t}}}}
      \leq \abs*{\E*{ \Ddiff \p{Z_{1}\p{t}} \: \Delta_{j} \kernel\,
           g\p{\vec Z_{-j}^{\P}\p{t}}}} \\
      &\hskip 2cm + \abs*{\E*{\Ddiff \p{Z_{1}\p{t}} \: \Delta_{j} \kernel \:
          \p{\vec Z^{\P}\p*{t} - \vec Z^{\P}_{-j}\p*{t}}^{T}
          \int_{0}^{1} %
          \nabla g\p[\big]{s \vec Z^{\P}\p*{t} - \p{1-s} \vec Z^{\P}_{-j}\p*{t}}
          \D s}}\eqperiod
    \end{aligned}
  \end{equation}
  Here, \(\vec Z^{\P}_{-j}\p{t} = \p{Z_{1}\p{t}, \ldots, Z_{j-1}\p{t},0,
    Z_{j+1}\p{t}, \ldots, Z_{\P}\p{t}} \in \vecZspace\) is the same as \(\vec
  Z^{\P}\p{t}\) but with the \(\nth{j}\) entry replaced by 0. Note that
  \begin{equation}\label{eq:b_i-bound-term-1-2-1}
    \begin{aligned}
      \E*{ \Ddiff \p{Z_{1}\p{t}} \: \Delta_{j} \kernel\: g\p{\vec Z^{\P}_{-j}\p{t}}}
      &= \E*{ \Ddiff \p{Z_{1}\p{t}} \: \E[\big]{\Delta_{j} \kernel \given Z_{1}\p{t}}\, g\p{\vec Z^{\P}_{-j}\p{t}}}
      &= 0\eqcomma
    \end{aligned}
  \end{equation}
  as \(Z_{j}\p{t}\) has law \(\law_{t}\) and is independent of \(\vec
  Z^{\P}_{-j}\) and of \(Z_{1}\p{t}\). Using that \(\kernel\) is Lipschitz
  continuous, we bound
  \begin{equation}\label{eq:b_i-bound-term-1-2-2}
    \begin{aligned}
      &\abs*{\E*{\Ddiff \p{Z_{1}\p{t}} \: \Delta_{j} \kernel \:
        \p{\vec Z^{\P}\p*{t} - \vec Z^{\P}_{-j}\p*{t}}^{T}
        \int_{0}^{1} %
        \nabla g\p[\big]{s \vec Z^{\P}\p*{t} - \p{1-s} \vec Z^{\P}_{-j}\p*{t}}
        \D s}} \\
      &\leq
        \sum_{i=1}^{d} \norm*{\frac{\partial g}{\partial x_{j,i}}}_{C\p{\vecZspace}}
        \ \E*{
        \abs*{\Ddiff \p{Z_{1}\p{t}}} \:
        \abs{
        \Delta_{j} \kernel
        } \: \norm{Z_{j}\p{t}}}\\
      \begin{details}
        & \leq \sum_{i=1}^{d}\norm*{\frac{\partial g}{\partial x_{j,i}}}_{C\p{\vecZspace}} \ \, \p{C
          \widetilde C}\, \E{\p*{1+ \norm{Z_{1}\p{t}} + C
            \p{1+\norm{Z_{1}\p{t}}+\E{\norm{Z_{1}\p{t}}}}} \, \p*{\norm{Z_{j}\p{t}} +
            \E{\norm{Z_{1}\p{t}}}} \:
          \norm{Z_{j}\p{t}}}\\
        & \lesssim \sum_{i=1}^{d}\norm*{\frac{\partial g}{\partial x_{j,i}}}_{C\p{\vecZspace}} \ \E{
          \p{1+\norm{Z_{1}\p{t}}+\E{\norm{Z_{1}\p{t}}}} \, \p*{\norm{Z_{1}\p{t}}
            + \E{\norm{Z_{1}\p{t}}}} \:
          \norm{Z_{j}\p{t}}}\\
        &\lesssim \sum_{i=1}^{d} \norm*{\frac{\partial g}{\partial x_{j,i}}}_{C\p{\vecZspace}}
        \, \p*{1 + \E{\norm{Z_{1}\p{t}}^{3}} + \p*{\E{\norm{Z_{1}\p{t}}}}^{3} }\\
      \end{details}
      &\lesssim  \sum_{i=1}^{d}\norm*{\frac{\partial g}{\partial x_{j,i}}}_{C\p{\vecZspace}}
        \, \p*{1 + \E{\norm{Z\p{t}}^{4}} }
        \eqperiod
    \end{aligned}
  \end{equation}
  Substituting~\cref{eq:b_i-bound-term-1-2-1}
  and~\cref{eq:b_i-bound-term-1-2-2} into~\cref{eq:b_i-bound-term-1-2}, and
  the result and~\cref{eq:b_i-bound-term-1-1} into~\cref{eq:b_i-bound-term-1},
  and then substituting the result %
  into~\cref{eq:b_i-bound},
  we arrive at the claimed result.
  \begin{details}
    \begin{equation}
      \begin{aligned}
        \abs*{\E*{\Delta f \,
        \: g\p*{\vec Z^{\P}\p{t}}}
        } \lesssim \P^{-1} \;
        \p*{
        \norm*{g}_{C\p{\rset^\P }} +
        \sum_{j=2}^{\P} \sum_{i=1}^{d} \norm*{\frac{\partial g}{ \partial x_{j,i}}}_{C\p{\rset^\P }}}\eqperiod
      \end{aligned}
    \end{equation}
  \end{details}
\end{proof}
 
We now state the main result of the paper, which will also depend on the
technical bounds that will be derived in \cref{sec:sde-variation-bounds}.
\begin{theorem}[Weak convergence result]\label{thm:weak-conv}
  \changed{Assume that weak solutions to \eqref{eq:Z-MV} exist} and let \(\vec
  Z^{\P} = \br{Z_{i}}_{i=1}^{\P}\) be \(\P\) independent processes each
  satisfying \cref{eq:Z-MV} with independent underlying Wiener processes.
  Assume that
  \begin{equation}\label{eq:C3-boundedness}
    \begin{aligned}
      \seminorm{\drift}_{\Cbspace^{3}\p{\rset^{d} \times \rset \,  ; \, \rset^{d} }}
      + \seminorm{\diffusion}_{\Cbspace^{3}\p{\rset^{d}\times \rset \, ; \,\rset^{d \times d'}}}
      + \seminorm{\kernel_{1}}_{\Cbspace^{3}\p{\rset^{d}\times \rset^{d}}}
      + \seminorm{\kernel_{2}}_{\Cbspace^{3}\p{\rset^{d}\times \rset^{d}}} < \infty
         \eqcomma%
    \end{aligned}
  \end{equation}
  \changed{and let \(\vec X^{\P} = \br{\X{i}{\P}}_{i=1}^{\P}\) satisfy
    \cref{eq:X-sys}}. Then for \(g : \vecZspace \to \rset\) with continuous
  bounded derivatives up to the third order and any \(T>0\), we have
  \begin{equation}\label{eq:weak-convergence}
    \begin{aligned}
      \abs*{\E*{g\p{\vec X^{\P}\p{T}} - g\p{\vec Z^{\P}\p{T}}}}
      &\lesssim \p{1 + \E{\norm{\xi}^{4}}}\; \P^{-1}\;
        \; \seminorm{g}_{\Cbspace^{3}\p{\vecZspace}} \eqperiod
    \end{aligned}
  \end{equation}
\end{theorem}
\begin{proof}
  Let \(a \equiv \p{a_{j}}_{j=1}^{d}\) for \(a_{j} \: : \: \rset^{d} \times \rset \to
  \rset\) and \(\Sigma \equiv \p{\Sigma_{j,j'}}_{j,j'=1}^{d} \defeq \sigma^{T} \sigma \) for \(\Sigma_{j,j'}
  \: : \: \rset^{d} \times \rset \to \rset\). For \(\vec x \equiv \p{\vec
    x_{i}}_{i=1}^{n} \equiv \p{x_{i,j}}_{i \in \br{1, \ldots, n},j \in \br{1, \ldots, d}}\),
  define the operators (recall that \(\mu_{t}\) is the law of \(Z\p{t}\))
  \[
    \def\uu{}
    \begin{aligned}
      \mathcal L_{\P} \uu
      &= \sum_{i=1}^{\P} \sum_{j=1}^{d} \sq[\Bigg]{\drift_{j}\p*{\vec x_{i}, \frac{1}{\P} \sum_{{i'=1}}^{\P} \kernel
        _{1}\p{\vec x_{i}, \vec x_{i'}}} \frac{\partial \uu}{\partial x_{i,j}} \\
      &\hskip 2cm+
        \frac{1}{2}  \sum_{j'=1}^{d}
        \Sigma_{j, j'}\p*{\vec x_{i},\frac{1}{\P} \sum_{{i'=1}}^{\P} \kernel_{2}\p{\vec x_{i}, \vec x_{i'}}}
        \frac{\partial^{2} \uu}{\partial x_{i,j} \partial x_{i,j'}}}
      \\\eqand
      \mathcal L_{\infty} \uu
      &= \sum_{i=1}^{\P} \sum_{j=1}^{d} \sq[\Bigg]{\drift_{j}\p*{\vec x_{i},\int_{\rset^{d}} \kernel_{1}\p{\vec x_{i}, \vec z} \law_{t}\p{\!\D \vec z}}
        \frac{\partial \uu}{\partial x_{i,j}}  \\
      &\hskip 2cm +
        \frac{1}{2} \sum_{j'=1}^{d}
        \Sigma_{j,j'}\p*{\vec x_{i},
        \int_{\rset^{d}} \kernel_{2}\p{\vec x_{i}, \vec z} \law_{t}\p{\!\D \vec z}}
        \frac{\partial^{2} \uu}{\partial x_{i,j} \partial x_{i,j'}}}\eqperiod
    \end{aligned}
  \]
  Consider the value function \(u\) satisfying the PDE
  \begin{equation}
    \label{eq:feynman-kac}
    \begin{aligned}
      &\frac{\partial u}{\partial t}\p{t,\vec x} + \mathcal L_{\P} u\p{t,\vec x} = 0, ~\text{for }
        0\leq t<T \text{ and } \vec x \in \vecZspace \\
      \eqand& u\p{T, \vec x} = g\p{\vec x}~\text{for }
              \vec x \in \vecZspace \eqperiod
    \end{aligned}
  \end{equation}
  Under \cref{eq:C3-boundedness}, \changed{a strong solution for
    \cref{eq:X-sys} exists and is unique \cite[Theorem~1.1 in
    Chapter~5]{friedman:sde-book} and} \( u\p{t, \vec x} = \E*{g\p{\vec
      X^{\P}\p{T}} \given \vec X^{\P}\p{t} = \vec x}\), see \cite[Theorem~6.1
  in Chapter~5]{friedman:sde-book}.
  \changed{Given the existence of a solution to \cref{eq:Z-MV} and its law,
    and recalling that the coefficients \(\drift\) and \(\diffusion\) in
    \cref{eq:Z-MV} are integrable and square-integrable, respectively, due to
    \cref{eq:C3-boundedness} and \cref{eq:Z-moments}, we define \(U\p{t}
    \defeq {u}\p{t, \vec Z^{\P}\p{t}}\) and apply It\^o's formula
    \changed{\cite[Theorem~5.3 in Chapter~4]{friedman:sde-book}} to arrive at}
  \begin{equation}\label{eq:weak-conv-representation}
    \begin{aligned}
      \E*{g\p*{\vec Z^{\P}\p{T}}} - \E*{g\p*{\vec X^{\P}\p{T}}}
      &= \E{U\p{T} - U\p{0}} \\
      &\begin{details}
        = \E*{ \int_{0}^{T} \D U\p{t}}\\
        & = \E*{ \int_{0}^{T} \p*{\mathcal L_{\infty} - \mathcal L_{\P}}u\p{t, \vec Z^{\P}\p{t}} \D t}\\
        &
      \end{details}
       = \int_{0}^{T}\E*{ \p*{\mathcal L_{\infty} - \mathcal L_{\P}}u\p{t, \vec Z^{\P}\p{t}} }\D t\eqperiod
    \end{aligned}
  \end{equation}
  The last equality is satisfied under the boundedness conditions in
  \cref{eq:C3-boundedness}, the integrability of \(Z\), and the boundedness of
  the derivatives of \(u\), which we will establish later. Then
  \begin{equation}\label{eq:Linfy-Ld}
    \begin{aligned}
      \E*{\p*{\mathcal L_{\infty} - \mathcal L_{\P}}u\p{t, \vec Z^{n}\p{t}}}
      &= \sum_{i=1}^{\P} \sum_{j=1}^{d}
        \E*{\Delta_{i} \drift_{j} \, \frac{\partial u}{\partial x_{i,j}}\p{t, \vec Z^{{n}}\p{t}}} \\
      &\hskip 1.5cm + \frac{1}{2} \sum_{j'=1}^{d} \E*{ \Delta_{i} \Sigma_{j,j'} \, \frac{\partial^{2} u}{\partial x_{i,j} \partial x_{i,j'}}\p{t, \vec Z^{{n}}\p{t}}}\eqcomma
    \end{aligned}
  \end{equation}
  where for \(f \equiv a_{j}, \kappa \equiv \kappa_{1}\) and \(f\equiv \Sigma_{j,j'}, \kappa\equiv \kappa_{2}\), we define
  \begin{equation}\label{eq:f-diff}
    \Delta_{i} f \defeq f\p*{Z_{i}\p{t}, \int_{\rset^{d}} \kernel\p{Z_{i}\p{t}, \vec z} \law_{t}\p{\!\D \vec z}}
    - f\p*{Z_{i}\p{t}, \frac{1}{\P}\sum_{j=1}^{\P} \kernel\p{Z_{i}\p{t}, Z_{j}\p{t}}} \eqperiod
  \end{equation}
  Using the triangle inequality, \cref{lem:weak-Z-rate} and
  \cref{eq:Z-moments}, we bound
  \[
    \begin{aligned}
      &\abs{\E*{\p*{\mathcal L_{\infty} - \mathcal L_{\P}}u\p{t, \vec Z^{n}\p{t}}}}\\
      &\lesssim
        \P^{-1} \: \p*{1+\E{\norm{Z\p{t}}^{4}}} \,
        \p*{\sum_{i=1}^{n}\sum_{j=1}^{d}
        \p*{\norm*{\frac{\partial u\p{t, \cdot}}{\partial x_{i,j}}}_{C^{1}\p{\vecZspace}}
        + \sum_{j'=1}^{d} \norm*{\frac{\partial^{2} u\p{t, \cdot}}{\partial x_{i,j} \partial x_{i,j'}}}_{C^{1}\p{\vecZspace}}} } \\
      &\lesssim
        \P^{-1} \p{1+ \E{\norm{\xi}^{4}}} \: \seminorm{u\p{t,\cdot}}_{\Cbspace^{3}\p{\vecZspace}}\eqperiod
    \end{aligned}
  \]

  It remains to show %
  the bound \(\seminorm{u\p{t,\cdot}}_{\Cbspace^{3}\p{\vecZspace}} \lesssim
  \seminorm{g}_{\Cbspace^{3}\p{\vecZspace}}\) for \(t \leq T\). %
  To that end, we use \cref{lemma:FK-Dell-bound} in the following section with
  an appropriate definition of \(\gdrift\) and \(\gdiffusion\) in terms of
  \(\drift\) and \(\diffusion\), respectively, since
  \cref{ass:sde-bounded-derv} is satisfied for \(q=3\) given
  \cref{eq:C3-boundedness}; see the discussion after
  \cref{ass:sde-bounded-derv}.
\end{proof}

\begin{corollary}\label{thm:finite-k}
  From the previous theorem, we can readily deduce that under the same
  conditions and for an integer \(k \leq \P\) and \(g: \rset^{k \times d} \to \rset\),
  we have
  \[
    \begin{aligned}
      &\abs*{\E{g\p{\X{1}{\P}\p{T}, \X{2}{\P}\p{T}, \ldots, \X{k}{\P}\p{T}} - g\p{Z_{1}\p{T}, Z_{2}\p{T}, \ldots, Z_{k}\p{T}} }} \\
      &\pushright{\lesssim
        \frac{1}{\P} \:  \p{1 + \E{\norm{\xi}^{4}}} \:
        \binom{d\, k+2}{3}
        \max_{\vec \ell \in \nset^{d k}, 1 \leq \abs{\vec \ell} \leq 3} \p*{\norm*{\frac{\partial^{\abs{\vec \ell}} g}{\partial \vec x^{\vec \ell}}}_{C\p{\rset^{k \times d}}}}\eqperiod}
    \end{aligned}
  \]
\end{corollary}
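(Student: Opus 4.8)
The plan is to reduce the statement to \cref{thm:weak-conv} by trivially extending $g$ to the full configuration space. Given $g : \rset^{k} \to \rset$ with $1 \leq k < \P$, define $\widetilde g : \rset^{\P} \to \rset$ by $\widetilde g\p{\vec x} \defeq g\p{x_{1}, x_{2}, \ldots, x_{k}}$, so that $\widetilde g$ is constant in the coordinates $x_{k+1}, \ldots, x_{\P}$. Since $\br{X_{i}^{\P}}_{i=1}^{\P}$ and $\br{Z_{i}}_{i=1}^{\P}$ are exactly the processes appearing in \cref{thm:weak-conv}, we have
\[
  \E*{\widetilde g\p{\vec X^{\P}\p{T}}} = \E*{g\p{X^{\P}_{1}\p{T}, \ldots, X^{\P}_{k}\p{T}}}
  \eqand
  \E*{\widetilde g\p{\vec Z\p{T}}} = \E*{g\p{Z_{1}\p{T}, \ldots, Z_{k}\p{T}}}\eqperiod
\]
Moreover $\widetilde g \in \Cbspace^{3}\p{\rset^{\P}}$ because $g \in \Cbspace^{3}\p{\rset^{k}}$, so \cref{thm:weak-conv} applies to $\widetilde g$ and yields
\[
  \abs*{\E*{g\p{X^{\P}_{1}\p{T}, \ldots, X^{\P}_{k}\p{T}} - g\p{Z_{1}\p{T}, \ldots, Z_{k}\p{T}}}} \lesssim \P^{-1}\, \seminorm{\widetilde g}_{\Cbspace^{3}\p{\rset^{\P}}}\eqperiod
\]

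It then remains to estimate $\seminorm{\widetilde g}_{\Cbspace^{3}\p{\rset^{\P}}} = \sum_{\vec \ell \in \nset^{\P},\, 1 \leq \abs{\vec \ell} \leq 3} \norm{\partial^{\abs{\vec \ell}} \widetilde g / \partial \vec x^{\vec \ell}}_{C^{0}_{b}\p{\rset^{\P}}}$. For any multi-index $\vec \ell \in \nset^{\P}$ with $\ell_{j} \geq 1$ for some $j > k$ the corresponding term vanishes identically, since $\widetilde g$ does not depend on $x_{j}$; and for $\vec \ell$ supported on $\br{1, \ldots, k}$ one has $\norm{\partial^{\abs{\vec \ell}} \widetilde g / \partial \vec x^{\vec \ell}}_{C^{0}_{b}\p{\rset^{\P}}} = \norm{\partial^{\abs{\vec \ell}} g / \partial x^{\vec \ell}}_{C^{0}_{b}\p{\rset^{k}}}$ after identifying $\vec \ell$ with its restriction to the first $k$ entries. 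Hence $\seminorm{\widetilde g}_{\Cbspace^{3}\p{\rset^{\P}}} = \seminorm{g}_{\Cbspace^{3}\p{\rset^{k}}}$, a sum of at most $\binom{k+3}{3} - 1$ terms, which is $\lesssim \binom{k}{3}$; bounding each surviving term by $\max_{\vec \ell \in \nset^{k},\, 1 \leq \abs{\vec \ell} \leq 3} \norm{\partial^{\abs{\vec \ell}} g / \partial x^{\vec \ell}}_{C^{0}_{b}\p{\rset^{k}}}$ and substituting into the previous display gives the claim.

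The argument is essentially mechanical, so there is no real obstacle; the only points to verify are that \cref{thm:weak-conv} imposes no hypothesis on the observable beyond $\Cbspace^{3}$-regularity (all structural conditions being on $\drift, \diffusion, \kernel_{1}, \kernel_{2}$ and the law of $\xi$), so that the extended $\widetilde g$ is admissible, and that the combinatorial count of multi-indices of order at most $3$ in $k$ variables is $\Order{k^{3}}$, of the same order as $\binom{k}{3}$, the remaining discrepancy being absorbed into the implied constant. Alternatively one could re-run the proof of \cref{thm:weak-conv} directly with terminal data $\widetilde g$, tracking that the vanishing of the derivatives of $g$ in the last $\P-k$ coordinates restricts the Taylor-expansion terms in \cref{eq:mu_i-bound-term-1-2,eq:sigma_i-bound-term-1-2} to indices $j \in \br{1, \ldots, k}$, but the extension argument above is shorter and yields the same $\binom{k}{3}$-type dependence.
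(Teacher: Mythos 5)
Your extension argument is precisely the ``readily deduced'' proof the paper has in mind (it supplies none explicitly): lift \(g\) to \(\rset^{\P}\) by ignoring the last \(\P-k\) coordinates, apply \cref{thm:weak-conv} to the lifted observable, and note that only multi-indices supported on the first \(k\) coordinates contribute to \(\seminorm{\widetilde g}_{\Cbspace^{3}\p{\rset^{\P}}}\), so the count of surviving terms is \(\binom{k+3}{3}-1 = \Order{k^{3}}\). The only caveat is cosmetic: the comparison \(\binom{k+3}{3}-1\lesssim\binom{k}{3}\) with a \(k\)-independent constant holds only for \(k\geq 3\), and for \(k\in\br{1,2}\) the right-hand side of the corollary vanishes identically --- an infelicity of the statement itself rather than of your argument.
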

\begin{details}
  \begin{proof}
    The result follows from the inequality
    \[
      \begin{aligned}
        \norm{g}_{\Cbspace^{3} \p{\rset^{k \times d}}}
        &= \sum_{\vec \ell \in \nset^{d k}, 1 \leq
          \abs{\vec \ell} \leq 3} \norm*{\frac{\partial^{\abs{\vec \ell}} g}{\partial \vec x^{\vec \ell}
          }}_{C\p{\rset^{k \times d}}}\\
        &\leq
          \p*{\max_{\vec \ell \in \nset^{d k}, 1 \leq
          \abs{\vec \ell} \leq 3}\norm*{\frac{\partial^{\abs{\vec \ell}} g}{\partial \vec x^{\vec \ell}
          }}_{C\p{\vecZspace}}}
          \sum_{\vec \ell \in \nset^{k}, 1 \leq
          \abs{\vec \ell} \leq 3} 1
      \end{aligned}
    \]
    and since
    \[
      \sum_{\vec \ell \in \nset^{d k}, 1 \leq \abs{\vec \ell} \leq 3} 1 =
      3 \binom{d\,k}{1} %
      + 3 \binom{d\,k}{2} %
      + \binom{d\,k}{3}   %
      \leq 3 \binom{d\,k+2}{3}
    \]
  \end{proof}
\end{details}
\cref{thm:finite-k} is useful when \(k\) is independent of \(\P\). For
example, consider for \(\tilde g : \rset^{d} \to \rset\),
\[
  g\p{\vec X^{\P}\p{T}} \defeq \frac{1}{\P} \sum_{i=1}^{\P} \tilde g\p{\X{i}{\P}\p{T}}\eqcomma
\]
and note that \( \seminorm {g}_{\Cbspace^{3}\p{\vecZspace}} = \seminorm {\tilde
  g}_{\Cbspace^{3}\p{\rset^{d}}} \) and hence \cref{thm:weak-conv} implies that
\[
  \abs*{\E*{g\p{\vec X^{\P}\p{T}} - g\p{\vec Z^{n}\p{T}}}} \lesssim \P^{-1}\; \;
  \seminorm{\tilde g}_{\Cbspace^{3}\p{\rset}} = \Order{\P^{-1}} \eqperiod
\]
We would not get the same result if we apply \cref{thm:finite-k} directly to \(g\) with
\(k=\P\). Instead, in this particular case, we may apply \cref{thm:finite-k} to \(\tilde g\)
with \(k=1\) to see that \(\E*{\tilde g\p{\X{1}{\P}\p{T}} - \tilde g\p{Z\p{T}}} =
\Order{\P^{-1}}\) and conclude by noting that
\[
  \E*{g\p{\vec X^{\P}\p{T}} - g\p{\vec Z^{n}\p{T}}} = \E*{\tilde
    g\p{\X{1}{\P}\p{T}} - \tilde g\p{Z\p{T}}} \eqperiod
\]

\begin{remark}
  In the special case when \(\kernel_{2}=0\), we can relax \cref{eq:C3-boundedness} and only
  assume that
  \begin{equation}\label{eq:C2-boundedness}
    \begin{aligned}
      \seminorm{\drift}_{\Cbspace^{2}\p{\rset^{d} \times \rset  ; \rset^{d} }}
      + \seminorm{\diffusion}_{\Cbspace^{2}\p{\rset^{d}\times \rset  ; \rset^{d \times d'}}}
      + \seminorm{\kernel_{1}}_{\Cbspace^{2}\p{\rset^{d}\times \rset^{d}}} < \infty
      \eqperiod%
    \end{aligned}
  \end{equation}
  The result would then also involve only the first and second derivatives of
  \(g\),
  \begin{equation*}
    \begin{aligned}
      \abs*{\E*{g\p{\vec X^{\P}\p{T}} - g\p{\vec Z^{n}\p{T}}}}
      &\lesssim  \P^{-1}\; \p{1 + \E{\norm{\xi}^{4}}} \;
                                                             \seminorm{g}_{\Cbspace^{2}\p{\vecZspace}} \eqcomma
    \end{aligned}
  \end{equation*}
  thus recovering a similar result to the one obtained, for example, in \cite[Chapter
  9]{kolokoltsov:book-nonlinear}.
\end{remark}

\begin{remark}[Multi-dimensional interaction kernels]
  \label{rem:multi-kernels}
  The result of \cref{thm:weak-conv} can be extended to multi-dimensional
  kernels, \(\kernel_{1}, \kernel_{2} : \rset^{d} \times \rset^{d} \to \rset^{m}\),
  for some integer \(m\), assuming that
  \[
    \seminorm{\kernel_{1}}_{\Cbspace^{3}\p{\rset^{d}\times \rset^{d}\, ;\,
      \rset^{m} }} + \seminorm{\kernel_{2}}_{\Cbspace^{3}\p{\rset^{d}\times
        \rset^{d}\, ;\, \rset^{m} }} < \infty\eqperiod
  \]
  The proof would follow the same steps by adding and subtracting appropriate
  terms in \eqref{eq:f-diff} and treating each component separately.
\end{remark}

\section{Moments Bounds for SDE Variations with Sobolev-Bounded
  Coefficients}\label{sec:sde-variation-bounds}
\def\Linftysp{\bddspace}
\def\bddspace{\infty}
\def\dim{\P}
In this section, for $T>0$ and $\p{t,\vec x} \in [0,T] \times \rset^{\dim}$, we consider a general SDE of
the form
\begin{equation}\label{eq:sde}
  \vec X^{t, \vec x}\p{s} = \vec x + \int_t^s\vec \gdrift\p{\tau,\vec X^{t, \vec x}\p{\tau}} \D \tau
  +  \int_t^s \vec \gdiffusion\p{\tau, \vec X^{t, \vec x}\p{\tau}} \D \vec W\p{\tau}, \quad s \in [t,T]\eqcomma
\end{equation}
with drift coefficient $\vec \gdrift \equiv \p{\gdrift_{i}}_{i=1}^{\dim} : [0,T] \times
\rset^{\dim} \to \rset^{\dim}$, a diffusion coefficient $\vec \gdiffusion \equiv
\p{\gdiffusion_{i,m}}_{i \in \br{1, \ldots, \dim}, m \in \br{1, \ldots, \dim'}}: [0,T] \times
\rset^{\dim} \to \rset^{\dim \times \dim'}$, and $\vec W%
$ is a vector of \(\dim'\) independent standard Wiener processes over a
probability space \(\p{\Omega, \mathcal F, \mathbb P}\) with the natural
filtration. {The main results of this section are
  \cref{prop:sde-variation,lemma:FK-Dell-bound} with the latter being used in
  the final step of the proof of \cref{thm:weak-conv}. Note that the system
  that we consider in \cref{thm:weak-conv} is \eqref{eq:X-sys} which is a
  specific example of \eqref{eq:sde} with \(\dim \leftarrow \P d \). However, we prove
  the results more generally for \eqref{eq:sde} to emphasize that the
  particular structure of \eqref{eq:X-sys} is irrelevant as long as
  \cref{ass:sde-bounded-derv} is satisfied. Additionally, the results in this
  section could be useful beyond the current work.}
In what follows, for any
\(f : [0,T] \times \rset^{\dim} \to \rset\), define the \changed{extended} norm
\[
  \norm{f}_{\infty} \defeq \norm{f}_{L^{\infty}\p{[0,T]; {C\p{\rset^{\dim}}}}} \defeq
  \sup_{0 \leq s \le T} \norm{f\p{s, \cdot}}_{C\p{\rset^{\dim}}}\eqperiod
\]
For brevity of presentation, we will define for the coefficients \(\gdrift\)
and \(\gdiffusion\) in \eqref{eq:sde} and for an integer \(r \geq 0\),
\begin{equation}\label{eq:vector-Dq-def}
  \norm{\gdrift}_{\mathrm D^{r}, \infty} \defeq \norm{\partial \gdrift}_{\mathrm D^{r-1}, \infty} +
  \sum_{\vec \ell \in \nset^{\dim}, \abs{\vec \ell} = r} \max_{\substack{i \in \br{1, \ldots, \dim} \\
      \textnormal{s.t. } \ell_{i}=0}} \, \norm*{\frac{\partial^r \gdrift_{i}}{\partial \vec x^{\vec \ell}}}_{\Linftysp}
\end{equation}
where \(\partial \gdrift = \p*{\frac{\partial \gdrift_{i}}{\partial x_{i}}}_{i=1}^{\dim}\) and
\(\norm{\partial \gdrift}_{\mathrm D^{-1}, \Linftysp} \defeq 0\). Similarity, we define
\begin{equation}\label{eq:matrix-Dq-def}
  \norm{\gdiffusion}_{\mathrm D^{r}, \ell^{2}_{\Linftysp}} \defeq \norm{\partial
    \gdiffusion}_{\mathrm D^{r-1}, \ell^{2}_{\Linftysp}} +
  \sum_{\vec \ell \in \nset^{\dim}, \abs{\vec \ell} = r} \max_{\substack{i \in \br{1, \ldots, \dim} \\
      \textnormal{s.t. } \ell_{i}=0}} \, \p*{\sum_{m=1}^{\dim'} \norm*{\frac{\partial^r \gdiffusion_{im}}{\partial
        \vec x^{\vec \ell}}}_{\Linftysp}^{2}}^{1/2}
\end{equation}
where \(\partial \gdiffusion = \p*{\frac{\partial \gdiffusion_{i m}}{\partial x_{i}}}_{i=\br{1,\ldots,
    \dim}, m=\br{1,\ldots, \dim'}}\) and \(\norm{\gdiffusion}_{\mathrm D^{-1},
  \ell^{2}_{\Linftysp}} \defeq 0\). Finally, for the process \(\vec X^{t, \vec
  x}\) in \eqref{eq:sde}, we define for any \(p \geq 1\),
\begin{equation}\label{eq:process-Dq-def}
  \begin{aligned}
    \norm{\vec X^{t, \vec x}}_{\mathrm D^{r}, L^{\infty}\p{[t, T]; L^{p}\p{\Omega, \mathbb P}}}
    &\defeq
      \norm{\partial \vec X^{t, \vec x}}_ {\mathrm D^{r-1}, L^{\infty}\p{[t, T]; L^{p}\p{\Omega, \mathbb P}}} \\
    &\hskip 1cm+
      \sum_{\vec \ell \in \nset^{\dim}, \abs{\vec \ell} = r} \max_{\substack{i \in \br{1, \ldots, \dim} \\
    \textnormal{s.t. } \ell_{i}=0}} \, \sup_{t \leq s \leq T}
    \E*{\abs*{\frac{\partial^{r} X_{i}^{t, \vec x}}{\partial \vec x^{\vec \ell}}\p{s}}^{p}}^{1/p}
  \end{aligned}
\end{equation}
where \(\partial \vec X^{t, \vec x} = \p*{\frac{\partial X_{i}^{t, \vec x}}{\partial
    x_{i}}}_{i=1}^{\dim}\) and \( \norm{\vec X^{t, \vec x}}_
{\mathrm D^{-1}, L^{\infty}\p{[t, T]; L^{p}\p{\Omega, \mathbb P}}} \defeq 0\). %

\begin{assumption}[Bounded derivatives]\label{ass:sde-bounded-derv}
  For an integer \(q\) we assume that $\vec \gdrift : [0,T] \times \rset^{\dim} \to
  \rset^{\dim}$ and $\vec \gdiffusion: [0,T] \times \rset^{\dim} \to \rset^{\dim \times
    \dim'}$ satisfy
    \[\sum_{r=1}^{q} \norm{\gdrift}_{\mathrm D^{r}, \Linftysp} +
      \norm{\gdiffusion}_{\mathrm D^{r}, \ell^{2}_{\Linftysp}} < C_{q}
      \eqcomma \] for some constant \(C_q > 0\) independent of \(\dim\).
\end{assumption}

The previous assumption deserves some explanation.
For example, focusing on the drift coefficient \(\gdrift\), the definition in
\cref{eq:vector-Dq-def} %
for \(r=1\) simplifies to
\[
  \begin{aligned}
    \norm{\gdrift}_{\mathrm D^{1}, \Linftysp} =
    \max_{i} \norm*{ \pderv*{\gdrift_{i}}{x_{i}}}_{\Linftysp} +
    {\sum_{\ell=1}^{\dim}
    \max_{i \neq \ell}\norm*{\pderv*{\gdrift_{i}}{x_{\ell} }}_{\Linftysp}}
    \eqcomma
  \end{aligned}
\]
and \(\norm{\gdiffusion}_{\mathrm D^{1}, \ell^{2}_{\Linftysp}}\) expands
similarly. Hence, a sufficient condition for \cref{ass:sde-bounded-derv} when
\(q=1\) is to bound
\[
  \p*{\sum_{m=1}^{\dim'}\norm*{\frac{\partial \gdiffusion_{im}}{\partial x_{\ell}}}_{\Linftysp}^{2}}^{1/2} +
  \norm*{\pderv*{\gdrift_{i}}{x_{\ell} }}_{\Linftysp} \leq \begin{cases}
                                                       \tilde C_{1} & i=\ell\eqcomma\\
                                                       \tilde C_1 \, \dim^{-1} & i \neq \ell\eqcomma\\
  \end{cases}
\]
for all \(i \in \br{1, \ldots, \dim}\) and some constant \(\tilde C_{1}>0\).
For \(r=2\), the definition \cref{eq:vector-Dq-def} simplifies to
\[
  \norm{\gdrift}_{\mathrm D^{2}, \Linftysp} =
  \max_{i} \norm*{\frac{\partial^{2} \gdrift_{i}}{ \partial x_{i}^{2} }}_{\Linftysp} + \sum_{\ell=1}^{\dim} \max_{i \neq \ell}\norm*{\frac{\partial^{2} \gdrift_{i}}{ \partial x_{i} \partial x_{\ell} }}_{\Linftysp} + \sum_{\ell=1}^{\dim} \sum_{\ell'=1}^{\dim}\max_{i \notin \br{\ell, \ell'}} \norm*{\frac{\partial^{2} \gdrift_{i}}{ \partial x_{\ell} \partial x_{\ell'} }}_{\Linftysp}\eqcomma
\]
and an additional condition on the second derivatives would be required
\cref{ass:sde-bounded-derv} when \(q=2\), for example
\[
  \p*{\sum_{m=1}^{\dim'}\norm*{\frac{\partial^r \gdiffusion_{im}}{\partial x^{\vec \ell}}}_{\Linftysp}^{2}}^{1/2}
  + \norm*{\frac{\partial^{2} \gdrift_{i}}{ \partial x_{\ell} \partial x_{\ell'} }}_{\Linftysp}
  \leq \begin{cases}
      \tilde C_{2} & i= \ell= \ell'  \\
      \tilde C_2 \, {\dim^{-1}} & i=\ell\neq\ell' \text{ or } i\neq\ell=\ell'\\
      \tilde C_2 \, {\dim^{-2}} & i, \ell, \ell' \text{ are distinct,}
  \end{cases}
\]
for all \(i \in \br{1, \ldots, \dim}\) and some constant \(\tilde C_{2} >0\). In
general, for any integer \(q>0\), a sufficient condition for
\cref{ass:sde-bounded-derv} is
\[
  \p*{\sum_{m=1}^{\dim'} \norm*{\frac{\partial^{\abs{\vec \ell}} \gdiffusion_{im}}{ \partial \vec
        x^{\vec \ell} }}_{\Linftysp}^{2}}^{1/2} + \norm*{\frac{\partial^{\abs{\vec \ell}}
      \gdrift_{i}}{ \partial \vec x^{\vec \ell} }}_{\Linftysp} \leq \tilde C_{q} \,
  {\dim^{1-\abs{\vec \ell+\vec e_{i}}_{0}}} \quad \text{for all } i \in \br{1,\ldots,\dim}
  \text{ and } \vec \ell \in \nset^{\dim} \::\: \abs{\vec \ell} \le q\eqcomma
\]
for a constant \(\tilde C_{q}>0\) and where \(\vec e_{i}\) is the \(\nth{i}\)
unit vector and \(\abs{\vec \ell}_{0}\) denotes the number of non-zero elements
of \(\vec \ell\).
\begin{lemma}[\(L^p\) bound of stochastic flows]\label{prop:sde-variation}
  Let \cref{ass:sde-bounded-derv} be satisfied for \(q \in \br{1, 2, 3}\) and
  for \(\vec X^{t, \vec x}\) in \cref{eq:sde} and any \(p \geq 2\) then there
  exists constants \(K_{q, p}\), independent of \(\dim\) and \(\vec x\), such
  that
    \[
      \norm{\vec X^{t, \vec x}}_{\mathrm D^{q}, L^{\infty}\p{[t, T]; L^{p}\p{\Omega,
            \mathbb P}}} \leq K_{q, p}\eqperiod
    \]
\end{lemma}
\begin{proof}
  We first note the following inequality for any index sets \(\mathcal I\) and
  \(\mathcal J\) and any sequence \(\br{a_{i,j}}_{i \in \mathcal I, j \in \mathcal
    J}\),
  \begin{equation}\label{eq:jensen-ineq-seq}
    \p*{\sum_{j\in \mathcal J} \p*{\sum_{i \in \mathcal I} a_{i,j}}^{2}}^{1/2}
    \leq \sum_{i \in \mathcal I} \p*{\sum_{j\in \mathcal J} a_{i,j}^{2}}^{1/2}\eqcomma
  \end{equation}
  which can be shown by expanding the square and using \holdereq{}.
  \begin{details}
    \[
      \begin{aligned}
        \p*{\sum_{j\in \mathcal J} \p*{\sum_{i \in \mathcal I} a_{i,j}}^{2}}^{1/2}
        &= \p*{ \sum_{i \in \mathcal I} \sum_{i' \in \mathcal I} \sum_{j \in \mathcal J} a_{i,j}
          a_{i',j}}^{1/2}\\
        &\leq \p*{ \sum_{i \in \mathcal I} \sum_{i' \in \mathcal I}
          \p*{\sum_{j \in \mathcal J} a_{i,j}^{2}}^{1/2}
          \p*{\sum_{j \in \mathcal J}  a_{i',j}^{2}}^{1/2}}^{1/2}\\
        &= \sum_{i \in \mathcal I} \p*{\sum_{j\in \mathcal J} a_{i,j}^{2}}^{1/2}\eqperiod
      \end{aligned}
    \]
  \end{details}
  Using \eqref{eq:jensen-ineq-seq} and Jensen's inequality, we can show the
  following inequality for any random variables \(\p{Y_{i}}_{i \in \mathcal I}\)
  and measurable sequence \(\br{a_{i,j}}_{i \in \mathcal I, j \in \mathcal J}\)
  \begin{equation}\label{eq:jensen-ineq}
    \begin{aligned}
      \E*{\p*{\sum_{j \in \mathcal J} \p*{\sum_{i \in \mathcal I} a_{i,j} Y_{i}}^{2}}^{p/2}}
      &\leq \p*{\sum_{i \in \mathcal I} \p*{\sum_{j\in \mathcal J} a_{i,j}^{2}}^{1/2}}^{p} \max_{i \in \mathcal I} \E{ \abs{Y_{i}}^{p} }\eqperiod
    \end{aligned}
  \end{equation}
  \begin{details}
    \begin{align*}
      \E*{\p*{\sum_{j \in \mathcal J} \p*{\sum_{i \in \mathcal I} a_{i,j} Y_{i}}^{2}}^{p/2}}
      &\leq \E*{\p*{\sum_{i \in \mathcal I} \p*{\sum_{j\in \mathcal J} a_{i,j}^{2} }^{1/2} \abs{Y_{i}}}^{p}} \\
      &\leq{\p*{\sum_{i \in \mathcal I} \p*{\sum_{j\in \mathcal J} a_{i,j}^{2} }^{1/2}}^{p-1}}
        \E*{\p*{\sum_{i \in \mathcal I} \p*{\sum_{j\in \mathcal J} a_{i,j}^{2} }^{1/2} \abs{Y_{i}}^{p}}} \eqperiod
    \end{align*}
  \end{details}
  In addition, note that for a positive sequence \(\br{a_{i,j}}_{i \in \mathcal
    I, j \in \mathcal J}\), we have
  \begin{equation}\label{eq:max-seq}
    \max_{i \in \mathcal I}\sum_{j \in \mathcal J} a_{i,j} \leq
    \max_{i \in \mathcal I}a_{i,i} + \max_{i \in \mathcal I} \sum_{j \in \mathcal J,i\neq j}^{\dim} a_{i,j} \leq
    \max_{i \in \mathcal I}a_{i,i} +  \sum_{j \in \mathcal J} \max_{i \in \mathcal I,j\neq i} a_{i,j}
  \end{equation}
  \subparagraph{First variation} First, note that the process
  \(\pderv*{X^{t,\vec x}_{i}}{x_{j}}\) exists under
  \cref{ass:sde-bounded-derv} for \(q=1\) and satisfies for \(s \geq t\) the SDE
  \[
    \begin{aligned}
      \pderv*{X^{t,\vec x}_{i}}{x_{j}}\p{s} = \delta_{i,j} &+ \int_t^s \sum_{k=1}^{\dim} \pderv*{\gdrift_{i}}{x_{k}}\p{\tau,\vec X^{t,\vec x}\p{\tau}} \, \pderv*{X^{t,\vec x}_{k}}{x_{j}}\p{\tau} \, \D \tau \\
      &+ \sum_{m=1}^{\dim'} \int_t^s \sum_{k=1}^{\dim}  \pderv*{\gdiffusion_{im}}{x_{k}} \p{\tau, \vec X^{t,\vec x}\p{\tau}} \pderv*{X^{t,\vec x}_{k}}{x_{j}} \p{\tau} \, \D W_{m}\p{\tau}\eqcomma
    \end{aligned}
  \]
  cf. \cite[Theorem 5.3 in Chapter 5]{friedman:sde-book}. %
  By \ito's formula
  \begin{equation}\label{eq:X-first-var-p-moment}
    \begin{aligned}
      &\E*{\abs*{\pderv*{X^{t,\vec x}_{i}}{x_{j}}\p{s}}^{p}} \leq \delta_{i,j} + p \int_{t}^{s} \E*{\abs*{\sum_{k=1}^{\dim} \pderv*{\gdrift_{i}}{x_{k}}\p{\tau,\vec X^{t,\vec x}\p{\tau}} \, \pderv*{X^{t,\vec x}_{k}}{x_{j}}\p{\tau} } \: \abs*{\pderv*{X^{t,\vec x}_{i}}{x_{j}}\p{\tau}}^{p-1}} \,
      \D \tau \\
      &\hskip 1cm + \frac{p\p{p-1}}{2} \int_{t}^{s} \sum_{m=1}^{\dim'} \E*{ \p*{\sum_{k=1}^{\dim} \pderv*{\gdiffusion_{im}}{x_{k}} \p{\tau, \vec X^{t,\vec x}\p{\tau}} \pderv*{X^{t,\vec x}_{k}}{x_{j}} \p{\tau}}^{2}
        \abs*{\pderv*{X^{t,\vec x}_{i}}{x_{j}}\p{\tau}}^{p-2}} \, \D \tau
      \eqperiod
    \end{aligned}
  \end{equation}
  For the term involving \(\gdrift_{i}\), using Young's inequality, we can bound
  \begin{equation}
    \label{eq:X-first-var-p-moment-term-1}
    \begin{aligned}
      & \E*{\abs*{\sum_{k=1}^{\dim} \pderv*{\gdrift_{i}}{x_{k}}\p{\tau,\vec X^{t,\vec x}\p{\tau}} \, \pderv*{X^{t,\vec x}_{k}}{x_{j}}\p{\tau} } \: \abs*{\pderv*{X^{t,\vec x}_{i}}{x_{j}}\p{\tau}}^{p-1}} \\
      &\leq
        \E*{
        \p*{\sum_{k=1}^{\dim} \norm*{\pderv*{\gdrift_{i}}{x_{k}}}_{\bddspace}\, \abs*{\pderv*{X^{t,\vec x}_{k}}{x_{j}}\p{\tau} }} \: \abs*{\pderv*{X^{t,\vec x}_{i}}{x_{j}}\p{\tau}}^{p-1}} \,\\
      &\leq \frac{1}{p} \, \E*{\
        \p*{\sum_{k=1}^{\dim}  \norm*{\pderv*{\gdrift_{i}}{x_{k}}}_{\bddspace} \abs*{\pderv*{X^{t,\vec x}_{k}}{x_{j}}\p{\tau} }}^{p}} + \frac{p-1}{p}\E*{\abs*{\pderv*{X^{t,\vec x}_{i}}{x_{j}}\p{\tau}}^{p}} \,
        \eqperiod
    \end{aligned}
  \end{equation}
  By using Jensen's inequality, \cref{eq:jensen-ineq} and
  \cref{ass:sde-bounded-derv}, we can further bound
  \begin{equation}\label{eq:X-first-var-p-moment-term-1-1}
    \begin{aligned}
      &\E*{\
        \p*{\sum_{k=1}^{\dim}  \norm*{\pderv*{\gdrift_{i}}{x_{k}}}_{\bddspace} \abs*{\pderv*{X^{t,\vec x}_{k}}{x_{j}}\p{\tau} }}^{p}}\\
      &\leq { 2^{p-1}} \norm*{\pderv*{\gdrift_{i}}{x_{j}}}_{\bddspace}^{p} \E*{\abs*{\pderv*{X^{t,\vec x}_{j}}{x_{j}}\p{\tau} }^{p}}+
      2^{p-1} \E*{\
        \p*{\sum_{k=1, k\neq j}^{\dim}  \norm*{\pderv*{\gdrift_{i}}{x_{k}}}_{\bddspace} \abs*{\pderv*{X^{t,\vec x}_{k}}{x_{j}}\p{\tau} }}^{p}} \\
      &\leq {2^{p-1}} \, \norm*{\pderv*{\gdrift_{i}}{x_{j}}}_{\bddspace}^{p} \, \E*{\abs*{\pderv*{X^{t,\vec x}_{j}}{x_{j}}\p{\tau} }^{p}}+ {2^{p-1} C_{1}^{p}} \, \max_{k \neq j}\E*{\abs*{\pderv*{X^{t,\vec x}_{k}}{x_{j}}\p{\tau} }^{p}}\eqperiod
    \end{aligned}
  \end{equation}
  For the term involving \(\gdiffusion_{i\cdot}\), using Young's inequality and
  bounding the derivatives of \(\gdiffusion_{i\cdot}\),
  \begin{equation}\label{eq:X-first-var-p-moment-term-2}
    \begin{aligned}
      &\E*{ \p*{\sum_{k=1}^{\dim} \pderv*{\gdiffusion_{im}}{x_{k}} \p{\tau, \vec X^{t,\vec x}\p{\tau}} \pderv*{X^{t,\vec x}_{k}}{x_{j}} \p{\tau}}^{2}
        \abs*{\pderv*{X^{t,\vec x}_{i}}{x_{j}}\p{\tau}}^{p-2}} \\
      &\leq \frac{2}{p} \E*{ \p*{\sum_{m=1}^{\dim'} \p*{\sum_{k=1}^{\dim} \norm*{\pderv*{\gdiffusion_{im}}{x_{k}} }_{\bddspace} \: \abs*{\pderv*{X^{t,\vec x}_{k}}{x_{j}} \p{\tau}}}^{2}}^{p/2}}
      + \frac{p-2}{p}\E*{\abs*{\pderv*{X^{t,\vec x}_{i}}{x_{j}}\p{\tau}}^{p}}\eqperiod
    \end{aligned}
  \end{equation}
  Using Jensen's inequality and~\cref{eq:jensen-ineq} and
  \cref{ass:sde-bounded-derv}, we can further bound
  \begin{equation}
    \label{eq:X-first-var-p-moment-term-2-1}
    \begin{aligned}
      &\E*{
        \p*{\sum_{m=1}^{\dim'} \p*{\sum_{k=1}^{\dim} \norm*{\pderv*{\gdiffusion_{im}}{x_{k}} }_{\bddspace} \: \abs*{\pderv*{X^{t,\vec x}_{k}}{x_{j}} \p{\tau}}}^{2}}^{p/2}}\\
      &\leq 2^{p-1}\: \p*{\sum_{m=1}^{\dim'} \norm*{\pderv*{\gdiffusion_{im}}{x_{j}} }_{\bddspace}^{2}}^{p/2} \:
        \E*{\abs*{\pderv*{X^{t,\vec x}_{j}}{x_{j}} \p{\tau}}^{p}}\\
      &\hskip 1cm +2^{p-1}\E*{\p*{\sum_{m=1}^{\dim'} \p*{\sum_{k=1, k\neq j}^{\dim} \norm*{\pderv*{\gdiffusion_{im}}{x_{k}} }_{\bddspace} \: \abs*{\pderv*{X^{t,\vec x}_{k}}{x_{j}} \p{\tau}}}^{2} }^{p/2}} \\
      &\leq
        2^{p-1}\: \p*{\sum_{m=1}^{\dim'} \norm*{\pderv*{\gdiffusion_{im}}{x_{j}} }_{\bddspace}^{2}}^{p/2} \:
        \E*{\abs*{\pderv*{X^{t,\vec x}_{j}}{x_{j}} \p{\tau}}^{p}} + 2^{p-1} C_{1}^{p}\:
        \max_{k \neq j} \E*{\abs*{\pderv*{X^{t,\vec x}_{k}}{x_{j}} \p{\tau}}^{p}}\eqperiod
    \end{aligned}
  \end{equation}
  Combining \cref{eq:X-first-var-p-moment-term-1-1} with
  \cref{eq:X-first-var-p-moment-term-1} and
  \cref{eq:X-first-var-p-moment-term-2-1} with
  \cref{eq:X-first-var-p-moment-term-2} and substituting into
  \cref{eq:X-first-var-p-moment}
  and simplifying yield
  \begin{equation}\label{eq:final-1st-var}
    \begin{aligned}
      \E*{\abs*{\pderv*{X^{t,\vec x}_{i}}{x_{j}}\p{s}}^{p}} \leq&& \delta_{ij}+&
      c_{1} \, \int_{t}^{s}  \E*{\abs*{\pderv*{X^{t,\vec x}_{i}}{x_{j}}\p{\tau}}^{p}} \D \tau \\
      &&+& c_{2} \, C_{1}^{p} \, \int_{t}^{s} \max_{k \ne j} \E*{ \abs*{\pderv*{X^{t,\vec x}_{k}}{x_{j}}\p{\tau} }^{p}} \D \tau\\
      &&+& c_{2} \, a_{i,j}^{p} \int_{t}^{s} \E*{ \abs*{\pderv*{X^{t,\vec x}_{j}}{x_{j}}\p{\tau} }^{p}}
      \: \D \tau\eqcomma
       \end{aligned}
  \end{equation}
where
  \[
    \begin{aligned}
      c_{1} &\defeq \frac{p \p{p-1}}{2}\eqcomma\\
      c_{2} &\defeq p \, 2^{p-1} \\
      \eqand a_{i,j}^{p} &\defeq \norm*{\pderv*{\gdrift_{i}}{x_{j}}}_{\bddspace}^{p} \, +  \p*{\sum_{m=1}^{\dim'} \norm*{\pderv*{\gdiffusion_{im}}{x_{j}} }_{\bddspace}^{2}}^{p/2} \:\eqcomma
    \end{aligned}
  \]
  so that \(\max_{i} a_{i,i} + \sum_{j=1}^{\dim} \max_{i \ne j} a_{i,j} \leq C_{1}\)
  by \cref{ass:sde-bounded-derv}. Then, taking the maximum over all \(i\) in
  \cref{eq:final-1st-var}, we arrive at
  \[
    \max_{i} \E*{\abs*{\pderv*{X^{t,\vec x}_{i}}{x_{j}}\p{s}}^{p}} \leq 1 +
    \p*{c_{1} + 2 C_{1}^{p} c_{2}}
    \int_{t}^{s} \max_{i} \E*{ \abs*{\pderv*{X^{t,\vec x}_{i}}{x_{j}}\p{\tau} }^{p}} \D \tau \eqcomma
  \]
  and using \groneq{} yields
  \begin{equation}\label{eq:maxi-1st-var}
    \max_{i} \E*{\abs*{\pderv*{X^{t,\vec x}_{i}}{x_{j}}\p{s}}^{p}} \leq %
     \exp\p*{\p{c_{1} + 2 C_{1}^{p} c_{2} }\p{s-t}} \eqdef D_{1}\eqcomma
  \end{equation}
  for all \(t \le s \leq T\). Taking the maximum over all \(i\neq j\) in \cref{eq:final-1st-var}, we arrive at
  \[
    \begin{aligned}
      \max_{i\neq j} \E*{\abs*{\pderv*{X^{t,\vec x}_{i}}{x_{j}}\p{s}}^{p}} \leq&\
      \p{c_{1}+c_{2} C_{1}^{p}} \int_{t}^{s}
      \max_{i \ne j} \E*{ \abs*{\pderv*{X^{t,\vec x}_{i}}{x_{j}}\p{\tau} }^{p}} \D \tau \\
      &+ c_{2} \p*{\max_{i \neq j}{a_{i,j}^{p}}}  \int_{t}^{s}  \, \E*{ \abs*{\pderv*{X^{t,\vec x}_{j}}{x_{j}}\p{\tau} }^{p}} \D\tau \eqperiod
    \end{aligned}
  \]
  Using \groneq{} and \eqref{eq:maxi-1st-var} yields
  \begin{align}
    \notag\max_{i \ne j} \E*{\abs*{\pderv*{X^{t,\vec x}_{i}}{x_{j}}\p{s}}^{p}}
    &\leq c_{2} \p*{\max_{i \neq j}a_{i,j}^{p}} \exp\p*{\p{c_{1} + c_{2} C_{1}^{p}  }\p{s-t}}
      {\int_{t}^{s}  \E*{ \abs*{\pderv*{X^{t,\vec x}_{j}}{x_{j}}\p{\tau} }^{p}} \D\tau}\\
    \notag  &\leq %
              c_{2} \p*{\max_{i \neq j}a_{i,j}^{p}}
              \,\exp\p*{\p{c_{1} + c_{2}C_{1}^{p}  }\p{s-t}}
              \,  \p*{\int_{t}^{s}D_{1} \D\tau}
    \\
    &\eqdef \widetilde D_{1} \, \p*{\max_{i \neq j}a_{i,j}^{p}} \label{eq:maxij-1st-var} \eqperiod
  \end{align}
  Finally, using \eqref{eq:maxi-1st-var}  and \eqref{eq:maxij-1st-var} we arrive at:
  \begin{align}
    \notag
    \norm{\vec X^{t, \vec x}}_
    {\mathrm D^{1}, L^{\infty}\p{[t, T]; L^{p}\p{\Omega, \mathbb P}}} &= \max_{i} \sup_{t \leq s \leq T} \E*{\abs*{\pderv*{X^{t,\vec x}_{i}}{x_{i}}\p{s}}^{p}}^{1/p} + \sum_{j=1}^{\dim} \max_{i \neq j}  \sup_{t \leq s \leq T} \E*{\abs*{\pderv*{X^{t,\vec x}_{i}}{x_{j}}\p{s}}^{p}}^{1/p}\\
      &\notag\leq D_{1}^{1/p} +\widetilde D_{1}^{1/p} \sum_{j=1}^{\dim} \max_{i \neq j} a_{i,j} \\
      &\notag\leq D_{1}^{1/p} + \widetilde  D_{1}^{1/p} C_{1} \\
      &\eqdef K_{1,p}\eqperiod
        \label{eq:first-variation-bound}
  \end{align}
  \paragraph{Second variation}
  In this section, we simplify the presentation by using \(D_{2}\) to denote
  constants depending only on \(t, T, p,\) and \(C_{2}\) and independent of
  \(\dim\). Observe that these constants might change their values from one
  line to the next.
  Again, note that the process \(\br*{\pderv*{X^{t,\vec x}_{i}}{x_{j},
      x_{j'}}\p{s}}_{s \in [t,T]}\) exists under \cref{ass:sde-bounded-derv} for
  \(q=2\) and satisfies for \(s \in [t,T]\) the SDE
 \[
   \begin{aligned}
     \pderv*{X^{t,\vec x}_{i}}{x_{j},x_{j'}}\p{s} =  & \int_t^s \sum_{k=1}^{\dim} \pderv*{\gdrift_{i}}{x_{k}}\p{\tau,\vec X^{t,\vec x}\p{\tau}} \, \pderv*{X^{t,\vec x}_{k}}{x_{j},x_{j'}}\p{\tau} \, \D \tau \\
     &+ \int_t^s \sum_{k=1}^{\dim} \sum_{k'=1}^{\dim} \pderv*{\gdrift_{i}}{x_{k},x_{k'}}\p{\tau,\vec X^{t,\vec x}\p{\tau}} \, \pderv*{X^{t,\vec x}_{k}}{x_{j}}\p{\tau} \pderv*{X^{t,\vec x}_{k'}}{x_{j'}}\p{\tau} \, \D \tau \\
     &+ \sum_{m=1}^{\dim'} \int_t^s \sum_{k=1}^{\dim} \pderv*{\gdiffusion_{im}}{x_{k}} \p{\tau, \vec X^{t,\vec x}\p{\tau}} \pderv*{X^{t,\vec x}_{k}}{x_{j},x_{j'}}
     \p{\tau} \, \D W_{m}\p{\tau}\\
     &+ \sum_{m=1}^{\dim'}  \int_t^s \sum_{k=1}^{\dim}\sum_{k'=1}^{\dim} \pderv*{\gdiffusion_{im}}{x_{k}, x_{k'}} \p{\tau, \vec X^{t,\vec x}\p{\tau}} \pderv*{X^{t,\vec x}_{k}}{x_{j}}\p{\tau} \pderv*{X^{t,\vec x}_{k'}}{x_{j'}} \p{\tau} \, \D W_{m}\p{\tau}\eqcomma
   \end{aligned}
 \]
 cf. \cite[Theorem~5.4 in Chapter~5]{friedman:sde-book}. %
 By \ito's formula,
 \begin{equation}\label{eq:X-second-var-p-moment}
   \begin{aligned}
     \E*{\abs*{\pderv*{X^{t,\vec x}_{i}}{x_{j},x_{j'}}\p{s}}^{p}} \leq&&& %
     p \int_t^s \E*{\p{ f_{1} + f_{3}} \: \,
       \abs[\Big]{\pderv*{X^{t,\vec x}_{i}}{x_{j},x_{j'}}\p{\tau}}^{p-1}} \, \D \tau \\
     &&&+ \frac{p\p{p-1}}{2}\int_t^s \E*{\p{f_{2} + f_{4}}\: \,
       \abs[\Big]{\pderv*{X^{t,\vec x}_{i}}{x_{j},x_{j'}}\p{\tau}}^{p-2}
     } \, \D \tau\eqcomma %
   \end{aligned}
 \end{equation}
 where
 \[
   \begin{aligned}
     f_{1} &\defeq \abs*{\sum_{k=1}^{\dim} \pderv*{\gdrift_{i}}{x_{k}}\p{\tau,\vec X^{t,\vec x}\p{\tau}} \, \pderv*{X^{t,\vec x}_{k}}{x_{j},x_{j'}}\p{\tau}} \eqcomma \\
     f_{2} &\defeq \sum_{m=1}^{\dim'} \p*{ \sum_{k=1}^{\dim} \p*{ \pderv*{\gdiffusion_{im}}{x_{k}} \p{\tau, \vec X^{t,\vec x}\p{\tau}}} \pderv*{X^{t,\vec x}_{k}}{x_{j},x_{j'}}
       \p{\tau}}^{2}\eqcomma  \\
     f_{3} &\defeq \abs*{\sum_{k=1}^{\dim} \sum_{k'=1}^{\dim} \pderv*{\gdrift_{i}}{x_{k},x_{k'}}\p{\tau,\vec X^{t,\vec x}\p{\tau}} \, \pderv*{X^{t,\vec x}_{k}}{x_{j}}\p{\tau} \pderv*{X^{t,\vec x}_{k'}}{x_{j'}}\p{\tau}}\\
     \eqand f_{4} &\defeq \sum_{m=1}^{\dim'} \p*{\sum_{k=1}^{\dim}\sum_{k'=1}^{\dim} \p*{ \pderv*{\gdiffusion_{im}}{x_{k}, x_{k'}} \p{\tau, \vec X^{t,\vec x}\p{\tau}}} \pderv*{X^{t,\vec x}_{k}}{x_{j}}\p{\tau} \pderv*{X^{t,\vec x}_{k'}}{x_{j'}} \p{\tau} }^{2}\eqperiod
   \end{aligned}
 \]
 As before, the first step is to apply Young's inequality to each of the previous integrands.
 For integers \(q_{u} \in \br{1,2}\) and \(u \in \br{1,2,3,4}\), we have
 \begin{equation}\label{eq:f-youngs-ineq}
   \E*{f_{u} \: \abs*{\pderv*{X^{t,\vec x}_{i}}{x_{j},x_{j'}}\p{\tau}}^{p-q_{u}}} \leq
   \frac{q_{u}}{p}\E*{f_{u}^{p/q_{u}}} + \frac{p-q_{u}}{p}\E*{\abs*{\pderv*{X^{t,\vec x}_{i}}{x_{j},x_{j'}}\p{\tau}}^{p}}\eqperiod
 \end{equation}
 We now turn our attention to bounding \(\E{f_{u}^{p/q_{u}}}\) by primarily
 using Jensen's inequality and \cref{eq:jensen-ineq}.
 We present the proof bounding \(\E{f_{2}^{p/2}}\) and \(\E{f_{4}^{p/2}}\),
 Bounding \(\E{f_{1}^{p}}\) and \(\E{f_{3}^{p}}\) is analogous,
 \begin{align}
   \notag \E{f_{2}^{p/2}} &\leq  \E*{\p*{\sum_{m=1}^{\dim'} \p*{\sum_{k=1}^{\dim} \norm*{\pderv*{\gdiffusion_{im}}{x_{k}}}_{\bddspace} \,
                            \abs*{\pderv*{X^{t,\vec x}_{k}}{x_{j},x_{j'}}\p{\tau}}}^{2}}^{p/2}}\\
   \notag &\leq 3^{p-1} \sum_{k \in \br{j, j'}} \p*{\sum_{m=1}^{\dim'} \norm*{\pderv*{\gdiffusion_{im}}{x_{k}}}_{\bddspace}^{2}}^{p/2} \, \E*{ \abs*{\pderv*{X^{t,\vec x}_{k}}{x_{j},x_{j'}}\p{\tau}}^{p}}
   \\
   \notag            &\quad + 3^{p-1} \E*{\p*{\sum_{m=1}^{\dim'}\p*{\sum_{k \in \br{1,2,\ldots, \dim} \setminus \br{j, j'}} \norm*{\pderv*{\gdiffusion_{im}}{x_{k}}}_{\bddspace} \, \abs*{\pderv*{X^{t,\vec x}_{k}}{x_{j},x_{j'}}\p{\tau}}}^{2}}^{p/2}}
   \\
   \notag &\leq 3^{p-1} \sum_{k \in \br{j, j'}} \p*{\sum_{m=1}^{\dim'} \norm*{\pderv*{\gdiffusion_{im}}{x_{k}}}_{\bddspace}^{2}}^{p/2} \, \p*{\E*{ \abs*{\pderv*{X^{t,\vec x}_{k}}{x_{j},x_{j'}}\p{\tau}}^{p}}}
   \\
                          &\quad + 3^{p-1} C_{1}^{p} \max_{k \in \br{1,2,\ldots, \dim} \setminus \br{j, j'}} \E*{\abs*{\pderv*{X^{t,\vec x}_{k}}{x_{j},x_{j'}}\p{\tau}}^{p}}\eqcomma
                                 \label{eq:f2-bound}
 \end{align}
 where we used \cref{eq:jensen-ineq} in the last step. On the other hand,
 \begin{align}
     \notag \E{f_{4}^{p/2}} &\leq \E*{
       \p*{\sum_{m=1}^{\dim'} \p*{\sum_{k=1}^{\dim} \sum_{k'=1}^{\dim} \norm*{\pderv*{\gdiffusion_{im}}{x_{k},x_{k'}}}_{\bddspace} \, \pderv*{X^{t,\vec x}_{k}}{x_{j}}\p{\tau} \pderv*{X^{t,\vec x}_{k'}}{x_{j'}}\p{\tau}}^{2}}^{p/2}}\\
     \notag &\leq 4^{p-1}
     \p*{\sum_{m=1}^{\dim'} \norm*{\pderv*{\gdiffusion_{im}}{x_{j},x_{j'}}}^{2}_{\bddspace}}^{p/2}\: \E*{\abs*{\pderv*{X^{t,\vec x}_{j}}{x_{j}}\p{\tau} \pderv*{X^{t,\vec x}_{j'}}{x_{j'}}\p{\tau}}^{p}}\\
     \notag &\quad+4^{p-1} \E*{ \p*{\sum_{m=1}^{\dim'}
         \p*{\sum_{k'=1, k'\neq j'}^{\dim} \norm*{\pderv*{\gdiffusion_{im}}{x_{j},x_{k'}}}_{\bddspace} \,  \pderv*{X^{t,\vec x}_{k'}}{x_{j'}}\p{\tau}}^{2}}^{p/2} \abs*{\pderv*{X^{t,\vec x}_{j}}{x_{j}}\p{\tau}}^{p}}\\
     \notag &\quad+4^{p-1} \E*{ \p*{\sum_{m=1}^{\dim'}
         \p*{\sum_{k=1, k\neq j}^{\dim} \norm*{\pderv*{\gdiffusion_{im}}{x_{k},x_{j'}}}_{\bddspace} \,  \pderv*{X^{t,\vec x}_{j}}{x_{k}}\p{\tau}}^{2}}^{p/2} \abs*{\pderv*{X^{t,\vec x}_{j'}}{x_{j'}}\p{\tau}}^{p}}\\
                             &\quad+4^{p-1} \E*{ \p*{\sum_{m=1}^{\dim'} \p*{\sum_{k=1, k\neq j}^{\dim} \sum_{k'=1, k'\neq j'}^{\dim} \norm*{\pderv*{\gdiffusion_{im}}{x_{k},x_{k'}}}_{\bddspace} \, \pderv*{X^{t,\vec x}_{k}}{x_{j}}\p{\tau} \pderv*{X^{t,\vec x}_{k'}}{x_{j'}}\p{\tau}}^{2}}^{p/2}}\eqperiod
                                  \label{eq:f4-bound}
 \end{align}
 Looking at each term separately and using the bound on the first variation
 \cref{eq:first-variation-bound},
 \[
   \begin{aligned}
     \E*{\abs*{\pderv*{X^{t,\vec x}_{j}}{x_{j}}\p{\tau} \pderv*{X^{t,\vec x}_{j'}}{x_{j'}}\p{\tau}}^{p}} &\leq
     \p*{\E*{\abs*{\pderv*{X^{t,\vec x}_{j}}{x_{j}}\p{\tau}}^{2p}}}^{1/2}
     \p*{\E*{\abs*{\pderv*{X^{t,\vec x}_{j'}}{x_{j'}}\p{\tau}}^{2p}}}^{1/2}\\
     & \leq K_{1, 2p}^{2p}\eqperiod
   \end{aligned}
 \]
 Moreover, using \holdereq{}, \cref{eq:jensen-ineq} and the bound on the first
 variation \cref{eq:first-variation-bound},
 \[
   \begin{aligned}
     &\E*{ \p*{\sum_{m=1}^{\dim'}
         \p*{\sum_{k'=1, k'\neq j'}^{\dim} \norm*{\pderv*{\gdiffusion_{im}}{x_{j},x_{k'}}}_{\bddspace} \,  \pderv*{X^{t,\vec x}_{k'}}{x_{j'}}\p{\tau}}^{2}}^{p/2} \abs*{\pderv*{X^{t,\vec x}_{j}}{x_{j}}\p{\tau}}^{p}}\\
     \begin{details}
       &\leq\E*{\p*{\sum_{m=1}^{\dim'}
           \p*{\sum_{k'=1, k'\neq j'}^{\dim} \norm*{\pderv*{\gdiffusion_{im}}{x_{j},x_{k'}}}_{\bddspace} \,  \pderv*{X^{t,\vec x}_{k'}}{x_{j'}}\p{\tau}}^{2}}^{p}}^{1/2} \E*{\abs*{\pderv*{X^{t,\vec x}_{j}}{x_{j}}\p{\tau}}^{2p}}^{1/2}\\
     \end{details}
     &\leq \p*{\sum_{k'=1}^{\dim} \p*{\sum_{m=1}^{\dim'} \norm*{\pderv*{\gdiffusion_{im}}{x_{j},x_{k'}}}_{\bddspace}^{2}}^{1/2}}^{p}
     \p*{ \max_{k'\neq j'} \E*{\abs*{\pderv*{X^{t,\vec x}_{k'}}{x_{j'}}\p{\tau}}^{2p}} }^{1/2}\:
     \E*{\abs*{\pderv*{X^{t,\vec x}_{j}}{x_{j}}\p{\tau}}^{2p}}^{1/2}\\
     &\leq K_{1,2p}^{p}
     \p*{\sum_{k'=1}^{\dim} \p*{\sum_{m=1}^{\dim'} \norm*{\pderv*{\gdiffusion_{im}}{x_{j},x_{k'}}}_{\bddspace}^{2}}^{1/2}}^{p}
     \p*{ \max_{k'\neq j'} \E*{\abs*{\pderv*{X^{t,\vec x}_{k'}}{x_{j'}}\p{\tau}}^{2p}} }^{1/2}\eqcomma
   \end{aligned}
 \]
 and
 \[
   \begin{aligned}
     &\E*{
       \p*{\sum_{m=1}^{\dim'} \p*{\sum_{k=1, k\neq j}^{\dim} \sum_{k'=1, k'\neq j'}^{\dim} \norm*{\pderv*{\gdiffusion_{im}}{x_{k},x_{k'}}}_{\bddspace} \, \pderv*{X^{t,\vec x}_{k}}{x_{j}}\p{\tau} \pderv*{X^{t,\vec x}_{k'}}{x_{j'}}\p{\tau}}^{2}}^{p/2}}\\
     &\leq \p*{\sum_{k=1, k\neq j}^{\dim} \sum_{k'=1, k'\neq j'}^{\dim}
       \p*{\sum_{m=1}^{\dim'} \norm*{\pderv*{\gdiffusion_{im}}{x_{k},x_{k'}}}_{\bddspace}^{2}}^{1/2} }^{p}
     \max_{\substack{k, k' \\
         k\neq j, k'\neq j'}}\E*{\abs*{\, \pderv*{X^{t,\vec x}_{k}}{x_{j}}\p{\tau} \pderv*{X^{t,\vec x}_{k'}}{x_{j'}}\p{\tau}}^{p}} \\
     &\leq C_{2}^{p} \p*{\max_{\substack{k, k\neq j}} \E*{\abs*{\, \pderv*{X^{t,\vec x}_{k}}{x_{j}}\p{\tau}}^{2p}}}^{1/2} \p*{\max_{\substack{k', k'\neq j'}} \E*{\abs*{ \pderv*{X^{t,\vec x}_{k'}}{x_{j'}}\p{\tau}}^{2p}}}^{1/2}\eqperiod
   \end{aligned}
 \]
 Hence
  \[
   \begin{aligned}
     \E{f_{4}^{p/2}} &\leq 4^{p-1} \p* {K_{1,2p}^{2p} \, F_{\gdiffusion_{1}, i,j,j'} + K_{1,2p}^{p}\, F_{\gdiffusion_{2}, i,j,j'} + K_{1,2p}^{p}\, F_{\gdiffusion_{2}, i,j',j} + C_{2}^{p} \, F_{3, j,j'}} \\
     &\leq D_{2} \p* {F_{\gdiffusion_{1}, i,j,j'} + F_{\gdiffusion_{2}, i,j,j'} + F_{\gdiffusion_{2}, i,j',j} + F_{3,j,j'}}\eqcomma
   \end{aligned}
 \]
 where
 \[
   \begin{aligned}
     F_{\gdiffusion_{1}, i,j,j'} &\defeq \p*{\sum_{m=1}^{\dim'} \norm*{\pderv*{\gdiffusion_{im}}{x_{j},x_{j'}}}_{\bddspace}^{2}}^{p/2}\eqcomma\\
     F_{\gdiffusion_{2}, i,j,j'} &\defeq \p*{\sum_{k'=1}^{\dim} \p*{\sum_{m=1}^{\dim'} \norm*{\pderv*{\gdiffusion_{im}}{x_{j},x_{k'}}}_{\bddspace}^{2}}^{1/2}}^{p}
     \p*{ \max_{k'\neq j'} \E*{\abs*{\pderv*{X^{t,\vec x}_{k'}}{x_{j'}}\p{\tau}}^{2p}} }^{1/2}\eqcomma\\
     \eqand F_{3,j,j'} &\defeq
     \p*{\max_{\substack{k\neq j}} \E*{\abs*{\, \pderv*{X^{t,\vec x}_{k}}{x_{j}}\p{\tau}}^{2p}}}^{1/2} \p*{\max_{\substack{k'\neq j'}} \E*{\abs*{ \pderv*{X^{t,\vec x}_{k'}}{x_{j'}}\p{\tau}}^{2p}}}^{1/2}\eqperiod
   \end{aligned}
 \]
 Note that, using \cref{ass:sde-bounded-derv},
 \[
   \begin{aligned}
     &\sum_{j=1}^{\dim} \max_{i\neq j} F_{\gdiffusion_{1}, i, j, i} + \sum_{j=1}^{\dim} \sum_{j'=1}^{\dim} \max_{i \notin \br{j, j'}} F_{\gdiffusion_{1}, i, j, j'}\\
     &\leq \sum_{j=1}^{\dim}  \max_{i\neq j} \p*{\sum_{m=1}^{\dim'} \norm*{\pderv*{\gdiffusion_{im}}{x_{i},x_{j}}}_{\bddspace}^{2}}^{p/2}
     + \sum_{j=1}^{\dim} \sum_{j'=1}^{\dim} \max_{i \notin \br{j, j'}} \p*{\sum_{m=1}^{\dim'} \norm*{\pderv*{\gdiffusion_{im}}{x_{j},x_{j'}}}_{\bddspace}^{2}}^{p/2}\\
     &\leq C_{2}^{p}\eqperiod
   \end{aligned}
\]
Similarly, using \cref{ass:sde-bounded-derv}, \cref{eq:max-seq} and
\cref{eq:first-variation-bound},
\[
  \begin{aligned}
    & \sum_{j=1}^{\dim} \max_{i \neq j} F_{\gdiffusion_{2}, i,j,i} +\sum_{j=1}^{\dim} \sum_{j'=1}^{\dim} \max_{i \notin \br{j,j'}} F_{\gdiffusion_{2}, i,j,j'}
    \\
    &= \p*{ \sum_{j=1}^{\dim} \max_{i \neq j} \sum_{k'=1}^{\dim} \p*{\sum_{m=1}^{\dim'} \norm*{\pderv*{\gdiffusion_{im}}{x_{j},x_{k'}}}_{\bddspace}^{2}}^{1/2}}^{p}
      \times \p*{\max_{i, k'} \p*{\E*{\abs*{\pderv*{X^{t,\vec x}_{k'}}{x_{i}}\p{\tau}}^{2p}}}^{1/2} } \\
    &\quad +
      \p*{ \sum_{j=1}^{\dim} \max_{i \neq j}\sum_{k'=1}^{\dim} \p*{\sum_{m=1}^{\dim'} \norm*{\pderv*{\gdiffusion_{im}}{x_{j},x_{k'}}}_{\bddspace}^{2}}^{1/2}}^{p} %
      \times \p*{\sum_{j'=1}^{\dim} \max_{k'\neq j'} \p*{\E*{\abs*{\pderv*{X^{t,\vec x}_{k'}}{x_{j'}}\p{\tau}}^{2p}}}^{1/2} }\\
    &\leq 2 C_{2}^{p} \, K_{1,2p}^{p}\eqperiod
  \end{aligned}
\]
Additionally, using \cref{eq:first-variation-bound},
\[
  \begin{aligned}
    \sum_{j=1}^{\dim} \sum_{j'=1}^{\dim} F_{3, j,j'}
    &\leq \p*{\sum_{j=1}^{\dim} \max_{\substack{k\neq j}} \p*{\E*{\abs*{\, \pderv*{X^{t,\vec x}_{k}}{x_{j}}\p{\tau}}^{2p}}}^{1/2}}^{p}
      \p*{ \sum_{j'=1}^{\dim} \max_{\substack{k'\neq j'}} \p*{\E*{\abs*{ \pderv*{X^{t,\vec x}_{k'}}{x_{j'}}\p{\tau}}^{2p}}}^{1/2}}\\
    &\leq K_{1,2p}^{2p}\eqperiod
  \end{aligned}
\]
Similarly, we bound \(\E{f_{3}^{p}}\) to arrive at
 \[
   \E{f_{3}^{p}} \le D_{2} \p* { F_{\gdrift_{1}, i,j,j'} + F_{\gdrift_{2}, i,j,j'} + F_{\gdrift_{2}, i,j',j} + F_{3, j,j'}}\eqcomma
 \]
 where
  \[
   \begin{aligned}
     F_{\gdrift_{1}, i,j,j'} &\defeq \norm*{\pderv*{\gdrift_{i}}{x_{j},x_{j'}}}_{\bddspace}^{p}\\
     \eqand F_{\gdrift_{2}, i,j,j'} &\defeq \p*{\sum_{k'=1}^{\dim} \norm*{\pderv*{\gdrift_{i}}{x_{j},x_{k'}}}_{\bddspace}}^{p}
     \p*{ \max_{k'\neq j'} \E*{\abs*{\pderv*{X^{t,\vec x}_{k'}}{x_{j'}}\p{\tau}}^{2p}} }^{1/2}\eqperiod
   \end{aligned}
 \]
 Therefore we can find \(b_{i,j,j'}\) such that
\begin{equation}\label{eq:f3-f4-bound}
  \int_{t}^{s} \E{f_{3}^{p}} + \p{p-1} \E{f_{4}^{p/2}} \D \tau
    \leq b_{i,j,j'}^{p}  \eqcomma
\end{equation}
which satisfy
\begin{equation}\label{eq:2nd-var-b-bound}
   \sum_{j=1}^{\dim} \p*{\max_{i \neq j} b_{i,j,i}}
   + \sum_{j=1}^{\dim} \sum_{j'=1}^{\dim}  \p*{\max_{i \notin \br{j, j'}} b_{i,j,j'}}
   \leq  D_{2}\eqperiod
 \end{equation}
 We use \cref{eq:f2-bound} and \cref{eq:f3-f4-bound}
 in~\cref{eq:f-youngs-ineq} and the result in \cref{eq:X-second-var-p-moment}
 and simplify to arrive at
\begin{equation}\label{eq:2nd-var-gen-bound}
  \begin{aligned}
    \E*{\abs*{\pderv*{X^{t,\vec x}_{i}}{x_{j},x_{j'}}\p{s}}^{p}} &\leq
    D_{2} \int_{t}^{s} \E*{\abs*{\pderv*{X^{t,\vec x}_{i}}{x_{j},x_{j'}}\p{\tau}}^{p}}\D \tau \\
    &\quad+ D_{2} \int_{t}^{s} \max_{k \in \br{1,2,\ldots, \dim} \setminus \br{j, j'}}
    \E*{\abs*{\pderv*{X^{t,\vec x}_{k}}{x_{j},x_{j'}}\p{\tau}}^{p}} \D\tau\\
    &\quad+D_{2}\, a_{i,j}^{p} \int_{t}^{s} %
    \E*{ \abs*{\pderv*{X^{t,\vec x}_{j}}{x_{j},x_{j'}}\p{\tau}}^{p}} \D\tau\\
    &\quad+D_{2} a_{i,j'}^{p} \int_{t}^{s} %
    \E*{ \abs*{\pderv*{X^{t,\vec x}_{j'}}{x_{j},x_{j'}}\p{\tau}}^{p}} \D\tau\\
    &\quad+ b_{i,j,j'}\eqperiod
  \end{aligned}
\end{equation}
Recall that
 \[
   \begin{aligned}
       a_{i,j}^{p}&\defeq
     \norm*{\pderv*{\gdrift_{i}}{x_{j}}}_{\bddspace}^{p} +
     \p*{\sum_{m=1}^{\dim'} \norm*{\pderv*{\gdiffusion_{im}}{x_{j}}}_{\bddspace}^{2}}^{p/2} \eqcomma
   \end{aligned}
 \]
 and \(\max_{i} a_{i,i} + \sum_{j=1}^{\dim} \max_{i \ne j} a_{i,j} \leq 2 C_{1}\).
 Then, taking the maximum over all \(i\) in \cref{eq:2nd-var-gen-bound} and
 using \groneq{} yield
 \begin{equation}\label{eq:2nd-var-max-bound}
   \max_{i}
   \E*{\p*{\pderv*{X^{t,\vec x}_{i}}{x_{j},x_{j'}}\p{s}}^{p}} \leq
   D_{2} %
   \eqcomma
 \end{equation}
 for all \(j, j' \in \br{1, \ldots, \dim}\). Setting \(j'=i\) and taking the maximum over \(i \neq j\) in \cref{eq:2nd-var-gen-bound} yield
 \[
   \begin{aligned}
     \max_{i \ne j}\E*{\abs*{\pderv*{X^{t,\vec x}_{i}}{x_{j},x_{i}}\p{s}}^{p}} \leq&&&
     D_{2}  \int_{t}^{s} \max_{i \ne j} \E*{\abs[\Big]{\pderv*{X^{t,\vec x}_{i}}{x_{j},x_{i}}\p{\tau}}^{p}}\D \tau \\
     &&+&D_{2} \,\p*{\max_{i \ne j} a_{i,j}^{p}} \int_{t}^{s}
     \max_{i} \E*{ \abs*{\pderv*{X^{t,\vec x}_{j'}}{x_{j'},x_{i}}\p{\tau}}^{p}} \D\tau\\
     &&+&D_{2}  \int_{t}^{s}
     \max_{i \ne j} \E*{ \abs*{\pderv*{X^{t,\vec x}_{i}}{x_{j},x_{i}}\p{\tau}}^{p}} \D\tau\\
     &&+& \max_{i \ne j} b_{i,j,i}^{p}\eqperiod
   \end{aligned}
 \]
 Using \eqref{eq:2nd-var-max-bound} to bound the second term, then \groneq{}
 and then taking the \(\nth{p}\) root and summing over \(j\) yields
 \begin{equation}\label{eq:2nd-var-single-sum-bound}
   \begin{aligned}
     \sum_{j=1}^{\dim} \max_{i \neq j} \E*{\abs*{\pderv*{X^{t,\vec x}_{i}}{x_{i},x_{j}}\p{s}}^{p}}^{1/p}
     \leq&&& %
     D_{2} \p*{\sum_{j=1}^{\dim} \p*{\max_{i \neq j} a_{i,j} } +
          \sum_{j=1}^{\dim} \p*{\max_{i \neq j} b_{i,j,i}}}
          \leq D_{2}  \eqcomma
   \end{aligned}
 \end{equation}
where we used \eqref{eq:2nd-var-b-bound}.
 Finally, taking the maximum over \(i \notin \br{j,j'}\) in \cref{eq:2nd-var-gen-bound},
 \[
     \begin{aligned}
    \max_{i\notin \br{j,j'}} \E*{\abs*{\pderv*{X^{t,\vec x}_{i}}{x_{j},x_{j'}}\p{s}}^{p}} \leq&&&
    D_{2} \, \int_{t}^{s} \max_{i\notin \br{j,j'}} \E*{\abs[\Big]{\pderv*{X^{t,\vec x}_{i}}{x_{j},x_{j'}}\p{\tau}}^{p}}\D \tau \\
    &&+&D_{2}\, \p*{\max_{i \neq j} a_{i,j}^{p}}  \int_{t}^{s}  %
    \E*{ \abs*{\pderv*{X^{t,\vec x}_{j}}{x_{j},x_{j'}}\p{\tau}}^{p}} \D\tau\\
    &&+&D_{2}\,\p*{\max_{i \neq j'} a_{i,j'}^{p}}  \int_{t}^{s} %
    \E*{ \abs*{\pderv*{X^{t,\vec x}_{j'}}{x_{j},x_{j'}}\p{\tau}}^{p}} \D\tau\\
    &&+& \max_{i\notin \br{j,j'}} b_{i,j,j'}^{p}\eqperiod
  \end{aligned}
 \]
 Then, using \groneq{}, taking the \(\nth{p}\) root and summing over \(j\) and \(j'\) yield
 \[
   \begin{aligned}
     &&&\sum_{j=1}^{\dim} \sum_{j'=1}^{\dim} \max_{i \notin \br{j, j'}} \E*{\abs*{\pderv*{X^{t,\vec x}_{i}}{x_{j},x_{j'}}\p{s}}^{p}}^{1/p}\\
     \leq &&& D_{2} \p*{\sum_{j=1}^{\dim} \max_{i \ne j} a_{i,j}} \p*{ \max_{j} \sum_{j'=1}^{\dim} {
         \sup_{t \leq \tau \leq s}\E*{ \abs*{\pderv*{X^{t,\vec x}_{j}}{x_{j},x_{j'}}\p{\tau}}^{p}}}^{1/p}} \\
     &&+& D_{2} \p*{\sum_{j'=1}^{\dim} \max_{i \ne j'} a_{i,j'}} \p*{ \max_{j'} \sum_{j=1}^{\dim}
       \sup_{t \leq \tau \leq s}\E*{ \abs*{\pderv*{X^{t,\vec x}_{j'}}{x_{j},x_{j'}}\p{\tau}}^{p}}^{1/p}} \\
    &&+& D_{2} \sum_{j=1}^{\dim} \sum_{j'=1}^{\dim}  \p*{\max_{i \notin \br{j, j'}} b_{i,j,j'}} \eqperiod
   \end{aligned}
 \]
 The result follows by \cref{eq:2nd-var-b-bound}, and since \(\sum_{j=1}^{\dim}
 \max_{i \ne j} a_{i,j} \le C_{1}\) and, by \cref{eq:max-seq},
 \cref{eq:2nd-var-max-bound} and \cref{eq:2nd-var-single-sum-bound},
 \[
   \begin{aligned}
     &\max_{j} \sum_{j'=1}^{\dim}  \sup_{t \leq \tau \leq s}\E*{ \abs*{\pderv*{X^{t,\vec x}_{j}}{x_{j},x_{j'}}\p{\tau}}^{p}} \\
     \leq &\max_{j}  \sup_{t \leq \tau \leq s}\E*{ \abs*{\pderv*{X^{t,\vec x}_{j}}{x_{j},x_{j}}\p{\tau}}^{p}}+ \sum_{j'=1}^{\dim}   \max_{j \neq j'}%
     \sup_{t \leq \tau \leq s}  \E*{ \abs*{\pderv*{X^{t,\vec x}_{j}}{x_{j},x_{j'}}\p{\tau}}^{p}}\\
     \leq & D_{2}\eqperiod
  \end{aligned}
 \]
  \paragraph{Third variation}
 The result for the third variations can be proven similarly and is omitted here.%
\end{proof}
\begin{proposition}[Bounds on derivatives of the value function]\label{lemma:FK-Dell-bound}
  Let \(u: [0,T] \times \rset^{\dim} \to \rset\) satisfy the Kolmogorov backward
  equation on \(\p{t, \vec x} \in [0, T) \times \rset^{\dim}\),
  \begin{equation}\label{eq:general-feynman-kac}
    \begin{aligned}
      \frac{\partial u}{\partial t}\p{t, \vec x} +
        \sum_{i=1}^\dim \gdrift_i\p{t, \vec x} \frac{\partial u}{\partial x_i}\p{t, \vec x} +
    \frac{1}{2} \sum_{i=1}^\dim \sum_{j=1}^\dim
    \p*{\sum_{m=1}^{\dim'} \gdiffusion_{im}\p{t, \vec x} \gdiffusion_{jm}\p{t, \vec x}}
    \pderv*{u}{x_i, x_j}\p{t, \vec x} &= 0\\
      \eqand u\p{T, \vec x} &= g\p{\vec x}\eqperiod
    \end{aligned}
  \end{equation}
  Assume that the coefficients, \(\br{\gdrift_{i}}_{i=1}^{\dim}\) and
  \(\br{\gdiffusion_{i,m}}_{i \in {1, \ldots, \dim}, m \in {1, \ldots, \dim }}\), satisfy
  \cref{ass:sde-bounded-derv} for \(q \in \br{2,3}\) and that \(g\) has
  continuous bounded derivatives up to order \(q\). Then, for some constant
  \(D_{q}\), independent of \(\dim\), there holds
  \[
    \seminorm{u\p{t, \cdot}}_{\Cbspace^{q}\p{\rset^{\dim}}} \leq D_{q}\: \seminorm{g}_{\Cbspace^{q}\p{\rset^{\dim}}}
    \eqperiod
  \]
\end{proposition}
\begin{proof}
  First note that under~\cref{ass:sde-bounded-derv} for \(q=2\), \(u\)
  satisfies for all \(t \in [0,T]\) and \(\vec x \in \rset^\dim\)
  \cite[Theorem~6.1 in Chapter~5]{friedman:sde-book}
\begin{equation}\label{eq:FK-exp-bound}
  u\p{t, \vec x} = \E*{g\p{\vec X^{t,\vec x}\p{T}}}\eqperiod
\end{equation}
Next, we differentiate \(u\) with respect to the initial conditions and
exchange the differentiation with the expectation in~\cref{eq:FK-exp-bound}
\cite[Theorem~5.5 in Chapter~5]{friedman:sde-book}.
Then, we bound
\[
  \begin{aligned}
    \sum_{j=1}^{\dim} \abs*{\pderv*{u}{x_{j}}\p{t,\vec x}} &= \sum_{j=1}^{\dim}
    \abs*{\E*{\sum_{i=1}^{\dim}\pderv*{g}{x_{i}}\p{\vec X^{t,\vec x}\p{T}}
        \: \pderv*{X^{t,\vec x}_{i}}{x_{j}}\p{T}}}\\
    &\leq \sum_{j=1} ^{\dim} \sum_{i=1}^{\dim}
    \norm*{\pderv*{g}{x_{i}}}_{C\p{\rset^{\dim}}} \:
    \E*{\abs*{\pderv*{X^{t,\vec x}_{i}}{x_{j}}\p{T}}}\\
    &\leq \sum_{i=1}^{\dim} \norm*{\pderv*{g}{x_{i}}}_{C\p{\rset^{\dim}}} \:
    \p*{\sum_{j=1}^{\dim}
      \E*{\abs*{\pderv*{X^{t,\vec x}_{i}}{ x_{j}}\p{T}}}}\\
    &\leq \p*{\sum_{i=1}^{\dim} \norm*{\pderv*{g}{x_{i}}}_{C\p{\rset^{\dim}}}}
    \p*{\max_{i} \sum_{j=1}^{\dim} \E*{\abs*{\pderv*{X^{t,\vec x}_{i}}{x_{j}}\p{T}}}} \\
    &\leq K_{1,2} \: \p*{\sum_{i=1}^{\dim}
      \norm*{\pderv*{g}{x_{i}}}_{C\p{\rset^{\dim}}}}\eqcomma
  \end{aligned}
\]
by \cref{prop:sde-variation}. Similarly,
\[
  \begin{aligned}
    \sum_{j=1}^{\dim} \sum_{j'=1}^{\dim} \abs*{\pderv*{u}{x_{j}, x_{j'}}\p{t,\vec x}}
    &\leq
      \sum_{j=1}^{\dim}\sum_{j'=1}^{\dim} \abs*{\E*{ \sum_{i=1}^{\dim}\pderv*{g}{x_{i}}\p{\vec
      X^{t,\vec x}\p{T}} \: \pderv*{X^{t,\vec x}_{i}}{x_{j}, x_{j'}}\p{T} }}\\%
    &+ \sum_{j=1}^{\dim}\sum_{j'=1}^{\dim} \abs*{\E*{ \sum_{i=1}^{\dim} \sum_{i'=1}^{\dim}\pderv*{g}{x_{i}, x_{i'}}\p{\vec X^{t,\vec x}\p{T}} \: \pderv*{X_{i}^{t,\vec x}}{x_{j}}\p{T}
      \pderv*{X_{i'}^{t,\vec x}}{x_{j}}\p{T} }}\\
    &\leq \p*{\sum_{i=1}^{\dim}\norm*{\pderv*{g}{x_{i}}}_{C\p{\rset^{\dim}}}}
      \p*{\max_{i} \sum_{j=1}^{\dim}\sum_{j'=1}^{\dim} \E*{ \abs*{\pderv*{X^{t,\vec x}_{i}}{x_{j}, x_{j'}}\p{T}} }}\\%
    &+ \p*{\sum_{i=1}^{\dim} \sum_{i'=1}^{\dim}\norm*{\pderv*{g}{x_{i},
      x_{i'}}}_{C\p{\rset^{\dim}}}} \p*{\max_{i} \sum_{j=1}^{\dim} \E*{
      \p*{\pderv*{X^{t,\vec x}_{i}}{x_{j}}\p{T}}^{2}}^{1/2}}^{2}
    \\
    &\leq \p{K_{1,2}^{2}+K_{2,2}}
    \p*{\sum_{i=1}^{\dim}\norm*{\pderv*{g}{x_{i}}}_{C\p{\rset^{\dim}}} +
      \sum_{i=1}^{\dim} \sum_{i'=1}^{\dim}\norm*{\pderv*{g}{x_{i},
          x_{i'}}}_{C\p{\rset^{\dim}}} }\eqperiod
  \end{aligned}
\]
It is easy to see that the previous proof extends to \(q=3\) as well.%

\end{proof}
 
\section{Numerical Verification}\label{sec:numerics} %
In this section, we present a numerical study of the weak error of a particle
approximation of the solution of a simple McKean-Vlasov
equation. %
For \(r \in \nset\) and \(x \in \rset\), consider the function
\[
  \psi_{r}\p{x} \defeq
  \begin{cases}
    \p*{1 - x^{2}}^{r} & \abs{x} \leq 1\eqcomma\\
    0 & \abs{x} > 1\eqperiod
  \end{cases}
\]
and note that \(\psi_{0}\) is discontinuous while for \(r > 0\) and all \(k < r\)
the \(\nth{k}\) derivative of \(\psi_{r}\) exists is uniformly bounded, and the
\(\nth{\p{r-1}}\) derivative is Lipschitz continuous. Subsequently, consider
the
particle system~\cref{eq:X-sys} with \(d=1\) and \(X_{i}(0)\) being uniformly
distributed in \([-1,1]\), and for \(x,y \in \rset\) set
\begin{equation}\label{eq:example-particle-system}
  \begin{aligned}
    \drift\p{x,y} &\defeq 2 \p{x-0.2} + y \eqcomma\\
    \diffusion\p{x,y} &\defeq 0.2\p{1+y}\eqcomma\\
    \kernel_{1}\p{x,y} &\defeq %
    \psi_{1}\p{10 \abs{x-y}}
    \\
    \eqand\kernel_{2}\p{x,y} &\defeq
    \psi_{1}\p{5 \abs{x-y}}
    \eqperiod
  \end{aligned}
\end{equation}
Note that the previous \(\drift, \diffusion, \kernel_{1},\) and
\(\kernel_{2}\), the latter two being only Lipschitz continuous, do not satisfy the
conditions the conditions of \cref{thm:weak-conv}. To approximate solutions
to~\cref{eq:X-sys}, we use an Euler--Maruyama time-stepping scheme with a fixed
number of time-steps.

To illustrate the convergence of \(\vec X^{\P}\) to \(Z\), the corresponding
solution of the McKean--Vlasov equation \cref{eq:Z-MV}, we consider the
sequence of systems, denoted by \(\vec X^{\P}\), satisfying \cref{eq:X-sys}
with an increasing number of particles, \(\P\). See \cref{fig:histogram} for a
histogram of the values of \(X^{\P}_{i}\p{1}\) for \(\P=2,048\) and using
\(64\) uniform time-steps in an Euler-Maruyama scheme. We also consider the
discontinuous function \( g\p{x} = %
\psi_{0}\p{10 \abs{x-0.2}}\) and plot in \cref{fig:conv-rates} the quantities
\(\p*{\E{ \p{X^{2 \P}_{i} - X^{\P}_{i}}^{2} }}^{1/2}\) and
\(\abs{\E{g\p{X^{2\P}_{i}} - g\p{X^{\P}_{i}}}}\). The same convergence
behaviour of these quantities was obtained with different numbers of uniform
time-steps. Even though \(\kernel_{1}\) and \(\kernel_{2}\) are only Lipschitz
continuous and \(g\) is discontinuous, the observed weak convergence rate is
still \(\Order{\P^{-1}}\), as predicted by \cref{thm:weak-conv} when
\(\kernel_{1},\kernel_{2},\) and \(g\) were assumed to be three-times
differentiable. %
Hence, it may be that the assumptions required by \cref{thm:weak-conv} and
similar proofs in the literature can be relaxed by exploiting, for example,
the smoothness of the probability measure.

\newcommand{\negratetriangle}[5]{%
    \draw [thick]
    ({#1}, {#2})
    -- ({#3}, {#2})
    -- ({#1}, {#2 * (#3/#1)^(#4)})
    node[inner sep=0,outer sep=0, anchor=south west, scale=0.8] at (#1, #2) {#5};
}

\newcommand{\posratetriangle}[5]{%
    \draw [thick]
    ({#1}, {#2})
    -- ({#3}, {#2})
    -- ({#1}, {#2 * (#3/#1)^(#4)})
    node[inner sep=0,outer sep=0, anchor=south east, scale=0.8] at (#1, #2) {#5};
  }

\newlength\figureheight
\newlength\figurewidth
\setlength\figureheight{7cm}
\setlength\figurewidth{9cm}

\begin{figure}\centering
\begin{tikzpicture}
  \begin{axis}[width=\figurewidth, height=\figureheight, ]
  \addplot+[black,ybar interval,mark=no] plot table {
    x y
    -0.4545454545454545 4.8828125e-08
    -0.43434343434343425 1.46484375e-07
    -0.41414141414141414 4.8828125e-07
    -0.3939393939393939 1.3671875e-06
    -0.3737373737373737 2.685546875e-06
    -0.3535353535353535 4.931640625e-06
    -0.33333333333333326 1.1572265625e-05
    -0.31313131313131304 2.255859375e-05
    -0.2929292929292928 4.892578125e-05
    -0.2727272727272727 0.000101708984375
    -0.2525252525252525 0.00018544921875
    -0.23232323232323226 0.0003435546875
    -0.21212121212121204 0.000587890625
    -0.19191919191919182 0.000977587890625
    -0.1717171717171716 0.0015392578125
    -0.1515151515151515 0.002366552734375
    -0.13131313131313127 0.003431298828125
    -0.11111111111111105 0.004854833984375
    -0.09090909090909083 0.006572998046875
    -0.07070707070707061 0.008620849609375
    -0.050505050505050386 0.010914794921875
    -0.030303030303030276 0.013451220703125
    -0.010101010101010055 0.016107763671875
    0.010101010101010166 0.018889111328125
    0.030303030303030498 0.021703662109375
    0.05050505050505061 0.02453701171875
    0.07070707070707072 0.02722548828125
    0.09090909090909105 0.029852783203125
    0.11111111111111116 0.03244169921875
    0.1313131313131315 0.0346052734375
    0.1515151515151516 0.03673544921875
    0.1717171717171717 0.038679443359375
    0.19191919191919204 0.0403087890625
    0.21212121212121215 0.041871875
    0.2323232323232325 0.04289423828125
    0.2525252525252526 0.0438203125
    0.27272727272727293 0.044395068359375
    0.29292929292929304 0.044683544921875
    0.31313131313131315 0.0445701171875
    0.3333333333333335 0.04418095703125
    0.3535353535353536 0.0433005859375
    0.3737373737373739 0.041834814453125
    0.39393939393939403 0.039872705078125
    0.41414141414141437 0.037112939453125
    0.4343434343434345 0.033762109375
    0.4545454545454546 0.029783642578125
    0.4747474747474749 0.025247021484375
    0.49494949494949503 0.0204890625
    0.5151515151515154 0.015874853515625
    0.5353535353535355 0.011604052734375
    0.5555555555555556 0.007965673828125
    0.5757575757575759 0.005159375
    0.595959595959596 0.003073388671875
    0.6161616161616164 0.001719873046875
    0.6363636363636365 0.000884912109375
    0.6565656565656568 0.00043291015625
    0.6767676767676769 0.0001994140625
    0.696969696969697 8.2666015625e-05
    0.7171717171717173 3.4716796875e-05
    0.7373737373737375 1.4111328125e-05
    0.7575757575757578 5.126953125e-06
    0.7777777777777779 1.904296875e-06
    0.7979797979797982 5.37109375e-07
    0.8181818181818183 2.9296875e-07
  };
\end{axis}
\end{tikzpicture}
   \caption{A histogram of the values of \(\br{X^{\P}_{i}\p{1}}_{i=1}^{\P}\) which follows the
    system of SDEs in \cref{eq:X-sys} with \cref{eq:example-particle-system}.
    Here, the values were approximated using the Euler--Maruyama time-stepping
    scheme with \(64\) uniform time-steps and \(\P=2,048\). }
\label{fig:histogram}
\end{figure}
\begin{figure}\centering
\begin{tikzpicture}
  \begin{axis}[
    xlabel={$\P$},
    ymin=5e-5,
    xmode=log,ymode=log,
    width=\figurewidth,
    height=\figureheight,
    tick align=outside,
    grid={both},
    legend pos=south west,
    legend style={draw=none}
    ]
    \addplot [black, thick, mark=diamond, mark size=3, mark options={solid,fill opacity=0}]
    table {%
      x y
      16 0.013503125
      32 0.00643125
      64 0.002975
      128 0.0015796875
      256 0.0008419921875
      512 0.000401953125
      1024 0.000196337890625
    };
    \addlegendentry{\(\abs*{\E*{g\p{X^{2\P}_{i}\p{1}}
          -g\p{X^{\P}_{i}\p{1}}}}\)}

    \addplot [black, thick, mark=square, mark size=3, mark options={solid,fill opacity=0}]
    table{
      x y
      16 0.012932772916020874
      32 0.008022558087718151
      64 0.005226210022281428
      128 0.0035632286243281654
      256 0.0024633436948714835
      512 0.001722599750887295
      1024 0.0012048219869378164
    };
    \addlegendentry{\(\abs*{\E*{
          \p*{X^{2\P}_{i}\p{1} -X^{\P}_{i}\p{1}}^{2}}}^{1/2}\)}
    \label{rates/strong}

    \negratetriangle{128}{0.0003}{350}{1}{\(\Order{\P^{-1}}\)}
    \posratetriangle{1000}{0.0035}{350}{0.5}{\(\Order{\P^{-1/2}}\)}
  \end{axis}
\end{tikzpicture}
   \caption{Convergence rates \(X^{\P}_{i}\p{1}\) which follows the system of
    SDEs in \cref{eq:X-sys} with \cref{eq:example-particle-system} and \(
    g\p{x} = \psi_{0}\p{10 \abs{x-0.2}}\). Here, the values were approximated
    using the Euler--Maruyama time-stepping scheme with \(N=64\) time-steps.
    Note that the rates are consistent with the predicted rates in
    \cref{thm:weak-conv} even though the coefficients of the SDE do not have
    sufficient smootheness as required by the theorem.}
\label{fig:conv-rates}
\end{figure}

\FloatBarrier

\bibliographystyle{plain}

\paragraph{Acknowledgements}
R. Tempone was partially supported by the KAUST Office of Sponsored Research
(OSR), under Award numbers URF/1/2281-01-01, URF/1/2584-01-01 in the KAUST
Competitive Research Grants Program Round 8, and the Alexander von Humboldt
Foundation, through the Alexander von Humboldt Professorship award.
 
\end{document}